\definecolor{darkblue}{rgb}{0, 0, .4}
\definecolor{grey}{rgb}{.7, .7, .7}
\newtheorem{theorem}{Theorem}[section]
\theoremstyle{definition}
\newtheorem{definition}[theorem]{Definition}
\newtheorem{example}[theorem]{Example}
\theoremstyle{remark}
\newtheorem{remark}[theorem]{Remark}
\numberwithin{equation}{section}
\theoremstyle{theorem}
\newtheorem{corollary}[theorem]{Corollary}
\newcommand{\lcm}[0]{\mathrm{lcm}}
\renewcommand{\b}[0]{\beta}
\renewcommand{\l}[0]{\ell}
\newcommand{\lf}[0]{L}
\newcommand{\nf}[0]{N}
\renewcommand{\mod}[0]{\ \mathrm{mod}\ }
\begin{document}

\title{Solitaire Mancala Games and the Chinese Remainder Theorem}

\begin{abstract}
Mancala is a generic name for a family of sowing games that are popular all over
the world.  There are many two-player mancala games in which a player may move
again if their move ends in their own store.  In this work, we study a simple
solitaire mancala game called Tchoukaillon that facilitates the analysis of
``sweep'' moves, in which all of the stones on a portion of the board can be
collected into the store.  We include a self-contained account of prior research
on Tchoukaillon, as well as a new description of all winning Tchoukaillon boards
with a given length.  We also prove an analogue of the Chinese Remainder Theorem
for Tchoukaillon boards, and give an algorithm to reconstruct a complete winning
Tchoukaillon board from partial information.  Finally, we propose a
graph-theoretic generalization of Tchoukaillon for further study.
\end{abstract}

\author{Brant Jones, Laura Taalman, and Anthony Tongen}
\address{Department of Mathematics and Statistics, MSC 1911, James Madison University, Harrisonburg, VA 22807}
\email{\texttt{[brant,taal,tongen]@math.jmu.edu}}

\keywords{}

\date{\today}

\maketitle

\bigskip
\section{Introduction}\label{s:intro}

Mancala is a generic name for a family of ``sowing'' games that are popular all
over the world, particularly in Africa and parts of Asia.  Archaeological
evidence suggests that some games are at least 1,300 years old.  Russ
\cite{Russ} has collected a description of many of the variations.  A
commercially available version that is often played in America is called {\em
Kalah}.  It was invented and patented by William Julius Champion Jr. in the
1950's; see \cite{wiki_kalah} for rules and references.  Outside of the US, {\em
Oware} (also known as {\em Wari} or {\em Ayo}) is perhaps the most widespread
game in the Mancala family that is played on a $2 \times 6$ board.  These games
have also interested researchers in mathematics and machine learning
\cite{Donkers}.

In this work, we study a simple solitaire mancala game called {\em
Tchoukaillon} that facilitates the mathematical analysis of many of the other
game variations.  Tchoukaillon was introduced in 1977 \cite{delpop}, and is
derived from another Mancala variant called {\em Tchuka Ruma} that was first
described by Delannoy \cite{delannoy} in 1895.  See Campbell and Chavey
\cite{campbell-chavey} for a detailed mathematical analysis of Tchuka Ruma.

The Tchoukaillon board consists of a sequence of {\em bins} that can contain
stones, together with an additional store called the {\em Ruma}.  The goal of
Tchoukaillon is to move all of the stones originally on the board into the Ruma.
During each turn, a player may pick up all of the stones in a selected bin and
then sow them by depositing one stone in each succeeding bin towards the Ruma so
that the last stone is deposited in the Ruma.  For example,
Figure~\ref{f:ex_game} illustrates a valid Tchoukaillon game on three bins, with
the Ruma on the left side of the board.  On the other hand, it is not possible
to clear the board $(0; 1, 1, 0)$ under the rules of Tchoukaillon.

\begin{figure}[h]
    \[ (0; 0, 1, {\bf 3}) \rightarrow (1; {\bf 1}, 2, 0) \rightarrow (2; 0, {\bf 2}, 0) \rightarrow (3; {\bf 1}, 0, 0) \rightarrow (4; 0, 0, 0) \]
\caption{A Tchoukaillon game on three bins} \label{f:ex_game}
\end{figure}

There are many two-player mancala games in which a player may move again if
their sowing ends in their own store.  Therefore, the Tchoukaillon positions
represent ``sweep'' moves where all of the stones on a portion of the board can
be collected into the Ruma.  Understanding the Tchoukaillon positions yields
important strategic information about these games, particularly when other
features of the games become less important, such as during the opening or
endgame.

\subsection{Initial strategy and notation}\label{s:init}

Suppose we represent a Tchoukaillon board by a vector $(b_1, b_2, \ldots, b_k)$
where the Ruma lies to the left of $b_1$ and each $b_i$ indicates the number of
stones in the $i$th bin.  If we hope to eventually clear the board by making
moves so that the last stone of each sowing lands in the Ruma, then we must have
$b_i \leq i$ for all $i$; otherwise sowing $b_i$ would overshoot the Ruma and
we would never be able to clear these stones.

Let us say that a bin is {\em harvestable} if $b_i = i$.  At each move, we must
sow the stones from a harvestable bin so that the last stone of our sowing lands
in the Ruma.  If there is more than one harvestable bin, then we must choose the
one that is closest to the Ruma, for otherwise we will create a bin with $b_i >
i$ that can never be cleared.

These considerations tell us that if it is possible to clear a Tchoukaillon
board, then it must be done by moves that {\em sow the harvestable bin closest
to the Ruma}. 
In fact, we can invert this condition and ``unplay'' Tchoukaillon, starting from
the board that is initially empty.  Each such {\em unmove} has the following
form:
\begin{itemize}
\item Pick up a stone from Ruma, since that is where the last stone is always played.
\item Move away from the Ruma, picking up a stone from each nonempty bin.
\item When you arrive at an empty bin, drop all the collected stones.
\end{itemize}
For example, the first several Tchoukaillon boards are shown in
Figure~\ref{f:boards}.  Each board can be played for one move to sweep a single
stone into the Ruma, or unplayed for one move to add a new stone onto the board.
If we do not constrain the length of the board, this unplaying process can be
continued indefinitely.  Thus, we have shown that the game tree for Tchoukaillon
is actually a path, and that there exists a unique board having a given total
number of stones.

\begin{figure}[t]
\begin{tabular}{|c|c|| c c c c c c|}
  \hline
  $n$ & $\l$ & $b_1$ & $b_2$ & $b_3$ & $b_4$ & $b_5$ & $b_6$ \\ \hline
  0 & 0 & 0 & 0 & 0 & 0 & 0 & 0 \\
  1 & 1 & 1 & 0 & 0 & 0 & 0 & 0 \\
  2 & 2 & 0 & 2 & 0 & 0 & 0 & 0 \\
  3 & 2 & 1 & 2 & 0 & 0 & 0 & 0 \\
  4 & 3 & 0 & 1 & 3 & 0 & 0 & 0 \\
  5 & 3 & 1 & 1 & 3 & 0 & 0 & 0 \\
  6 & 4 & 0 & 0 & 2 & 4 & 0 & 0 \\
  7 & 4 & 1 & 0 & 2 & 4 & 0 & 0 \\
  8 & 4 & 0 & 2 & 2 & 4 & 0 & 0 \\
  9 & 4 & 1 & 2 & 2 & 4 & 0 & 0 \\
  10 & 5 & 0 & 1 & 1 & 3 & 5 & 0 \\
  11 & 5 & 1 & 1 & 1 & 3 & 5 & 0 \\
  12 & 6 & 0 & 0 & 0 & 2 & 4 & 6 \\
  13 & 6 & 1 & 0 & 0 & 2 & 4 & 6 \\
  14 & 6 & 0 & 2 & 0 & 2 & 4 & 6 \\
  15 & 6 & 1 & 2 & 0 & 2 & 4 & 6 \\
  16 & 6 & 0 & 1 & 3 & 2 & 4 & 6 \\
  17 & 6 & 1 & 1 & 3 & 2 & 4 & 6 \\
  \hline
\end{tabular}
\caption{Initial Tchoukaillon boards where $n$ is the total number of stones on
the board, $\l$ is the length of the board, and $b_i$ is the number of stones in bin $i$.} \label{f:boards}
\end{figure}

Throughout this paper, we will let $b(n)$ denote the vector $(b_1(n), b_2(n),
\ldots, b_i(n), \ldots)$, where $b_i(n)$ is the number of stones in the $i$th
bin of the unique winning Tchoukaillon board having $n$ total stones.  Here, we
number the bins beginning with $i = 1$ closest to the Ruma, increasing as we
move to the right.  More precisely, we have $b(0) = (0, 0, \ldots)$ and define
\begin{equation}\label{e:unplay}
  b_i(n+1) = \begin{cases}
    b_i(n) & \text{ if } i > p(n) \\
    i & \text{ if } i = p(n) \\
    b_i(n)-1 & \text{ if } i < p(n) \\
\end{cases}
\end{equation}
where $p(n) = \min\{j : b_j(n) = 0\}$ is the leftmost empty bin.  Note also that
\[ (p(n-1), p(n-2), \ldots, p(1)) \]
represents the sequence of bins that are played to actually win the board
$b(n)$.  Finally, we call 
\[ \lf(n) = \lf(b(n)) = \min \{i : \text{ $b_j(n) = 0$ whenever $j > i$ }\} \]
the {\em length} of the board $b(n)$.
Equivalently, we can define a sequence $\nf(\l) = \min \{n : \lf(n) = \l\}$.
These are the boards where the length increases, so the $\nf(\l)$ sequence begins
$1, 2, 4, 6, 10, 12, 18, \ldots$ according to the data in Figure~\ref{f:boards};
this is {\tt A002491} in \cite{sloanes}.

\subsection{Prior research}
Tchoukaillon was studied by Veronique Gautheron and introduced in 1977 by
Deledicq and Popova \cite{delpop}.  The authors proved the results about
unplaying and the uniqueness of boards with a given number of stones that we
related in Section~\ref{s:init}.  They also posed two natural questions for
further research:
\begin{enumerate}
    \item[(1)]  Can the $n$th winning board $b(n)$ be found without iterating
        through all of the prior boards?
    \item[(2)]  What is the function $\lf(n)$ (asymptotically or explicitly)?
\end{enumerate}

The next substantial results appear to be Betten's 1988 paper \cite{Betten}.
He did not cite \cite{delpop} and appears to have been motivated primarily by
the Sloane's encyclopedia entry for the sequence $\nf(\l)$.  Betten answered
question (1) and related the sequence $\nf(\l)$ to a generalized sieve of
Eratosthenes that had been studied in the late 1950's by Erd\"os, Jabotinsky,
and David \cite{Erdos,David}.  This enabled him to give the asymptotic formula 
\[ \nf(\l) = \frac{\l^2}{\pi} + O(\l^{4/3}) \]
which provides an answer for question (2).  

Betten also observed and proved the important result that the sequence
$\{b_i(n)\}_{n=0}^{\infty}$ obtained from the $i$th bin is periodic with period
$\lcm(2, 3, \ldots, i+1)$.  For example, the data we have displayed in
Figure~\ref{f:boards} is seen to be $2$-periodic in column $1$, $6$-periodic in
column $2$, and $12$-periodic in column $3$.

In 1995, Broline and Loeb \cite{Broline} gave a detailed mathematical analysis
of Tchoukaillon, motivated in part by a paper \cite{campbell-chavey} of
Campbell and Chavey on the related game Tchuka Ruma.  It seems that they were
not aware of Betten's paper.  They observed the periodicity, and also made the
connection to the sieving work from the 1950's.  In fact, they were able to
strengthen the asymptotic formula for $\nf(\l)$ to
\[ \nf(\l) = \frac{\l^2}{\pi} + O(\l) \]
proving a conjecture from Erd\"os--Jabotinsky \cite{Erdos}.  This yields a
method to approximate $\pi$ using Tchoukaillon.

However, they did not answer question (1).  The AMS Math Review by Richard
Nowakowski of \cite{Broline} states ``Given [the total number of stones], no way
is known at this time to quickly determine the winning arrangement,'' so this
seems to have been viewed as an open problem in the combinatorial games
community.

\subsection{Outline}

This paper gives a self-contained account of the results mentioned above and
provides some ideas for further research.  In particular, we give a connection
with the Chinese Remainder Theorem that appears to be new.

We begin in Section~\ref{s:nonit} with a simple formula answering question (1).
Once this result is in hand, it is easy to obtain the periodicity result.  In
Section~\ref{s:length_boards}, we develop a new dual answer to question (1),
and show how to construct all boards of a given length.  This leads to a
formula for $\nf(\l)$, answering question (2) explicitly.  As we have
indicated, these results were known to earlier researchers, but our proofs are
much more straightforward.  In Section~\ref{s:crt}, we consider boards in which
some subset of the $b_i$ have been specified and ask when these can be
completed to a winning Tchoukaillon board.  This leads to an analogue of the
Chinese Remainder Theorem.  Finally, in Section~\ref{s:conclusions}, we propose
a generalization of Tchoukaillon that can be played on any directed graph and
consider some directions for future research.

\bigskip
\section{Non-iterative board construction}\label{s:nonit}

We find that $b(n)$ can be characterized as the unique sequence so that the sum
of the stones in the first $i$ bins is always equivalent to $n$ mod $(i+1)$.
This result also appears in \cite{Betten}.

\begin{theorem}\label{t:nonit}
Fix $n \geq 0$.  The $b_i(n)$ satisfy
\begin{equation}\label{e:modleft}
\sum_{j=1}^{i} b_j(n) \equiv n \mod (i+1), \text{ for each $i
\geq 1$. }
\end{equation}

In fact, this uniquely determines the $b_i(n)$, given $n$, as
\[ b_i(n) = \left(n - \sum_{j=1}^{i-1} b_j(n)\right) \mod (i+1), \]
normalized so that $0 \leq b_i(n) \leq i$.
\end{theorem}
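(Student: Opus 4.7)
The plan is to prove the congruence \eqref{e:modleft} by induction on $n$, using the unplay recursion \eqref{e:unplay}, and then to derive the explicit formula and uniqueness as an immediate corollary of the fact that every residue class mod $i+1$ has a unique representative in $\{0,1,\ldots,i\}$.

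For the base case $n=0$, all $b_j(0)=0$ and the congruence $0\equiv 0\pmod{i+1}$ holds for every $i$. For the inductive step, assume \eqref{e:modleft} for $n$, set $p=p(n)$, and split into three cases depending on how $i$ compares to $p$. When $i<p$, all of $b_1(n),\ldots,b_i(n)$ are positive (since $p$ is the \emph{leftmost} empty bin), so \eqref{e:unplay} gives $\sum_{j=1}^i b_j(n+1)=\sum_{j=1}^i b_j(n)-i$; using $-i\equiv 1\pmod{i+1}$ and the inductive hypothesis gives the desired congruence. When $i=p$, bins $1,\ldots,p-1$ each drop by one while bin $p$ jumps from $0$ to $p$, so the partial sum changes by $-(p-1)+p=+1$, which is exactly what we need modulo $p+1$. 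When $i>p$, the same accounting shows the partial sum changes by $+1$, and again combining with the inductive hypothesis yields $\sum_{j=1}^i b_j(n+1)\equiv n+1\pmod{i+1}$.

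For the explicit formula and uniqueness, I would note that we already know $0\le b_i(n)\le i$ from the discussion in Section~\ref{s:init} (otherwise the board could not be cleared). Rewriting \eqref{e:modleft} as
\[ b_i(n)\equiv n-\sum_{j=1}^{i-1}b_j(n)\pmod{i+1}, \]
and observing that the set $\{0,1,\ldots,i\}$ contains exactly one representative of each residue class modulo $i+1$, we see that $b_i(n)$ is forced to be the reduction of $n-\sum_{j<i}b_j(n)$ into this range. So the values of $b_1(n),b_2(n),\ldots$ are determined one at a time by the congruences.

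No step here looks genuinely hard: the only place where one has to be a little careful is in the case $i<p$, to verify that all the bins being decremented really are nonempty, but this is exactly the defining property of $p(n)$ as the leftmost zero. The book-keeping in the case $i=p$ (and $i>p$) uses the cancellation $-(p-1)+p=+1$, which is the one small computation that drives the whole proof.
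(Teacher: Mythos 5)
Your proof is correct and follows essentially the same route as the paper: induction on $n$ via the unplay recursion \eqref{e:unplay}, with the cases $i<p$ (sum drops by $i\equiv -1$) and $i\ge p$ (sum rises by $1$; you merely split this into $i=p$ and $i>p$), plus the observation that $0\le b_i(n)\le i$ makes the congruences determine the board uniquely. No gaps.
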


\begin{example}
Suppose we wish to find the unique winning board with $n=15$ stones, without
unplaying from the trivial board.  We can use the formula given in
Theorem~\ref{t:nonit} to obtain
\[ b_1(15) = 15 \mod 2 = 1, \]
\[ b_2(15) = (15 - b_1(15)) \mod 3 = 14 \mod 3 = 2, \]
\[ b_3(15) = (15 - b_1(15) - b_2(15)) \mod 4 = 12 \mod 4 = 0, \]
\[ b_4(15) = (15 - b_1(15) - b_2(15) - b_3(15)) \mod 5 = 12 \mod 5 = 2, \]
\[ b_5(15) = (15 - b_1(15) - b_2(15) - b_3(15) - b_4(15)) \mod 6 = 10 \mod 6 = 4, \]
\[ b_6(15) = (15 - b_1(15) - b_2(15) - b_3(15) - b_4(15) - b_5(15)) \mod 7 = 6 \mod 7 = 6, \]
and $b_i(15) = 0$ for all $i \geq 7$.  Hence, the board is $(1,2,0,2,4,6)$.
\end{example}

We can view the formula from Theorem~\ref{t:nonit} as a reverse sowing game.
We begin with $n$ stones in our hand, and sow $b_i$ stones into the $i$th bin
in such a way that the number of stones remaining in our hand is divisible by
$i+1$ and as large as possible, at each step.  When $n = 15$ for example, we
sow $1$ stone into the first bin in order to leave $14$ stones in our hand
(which is divisible by $2$); next, we sow $2$ stones into the second bin to
leave $12$ stones in our hand (which is divisible by $3$); etc.

\begin{proof}
Since the $b_i(n)$ must satisfy $0 \leq b_i(n) \leq i$ in order to form a
winnable board that never places stones beyond the Ruma, the first and second
statements of the theorem are equivalent.

We work by induction to prove the first statement, so suppose that this
statement is true for $b(n)$ and consider the board $b(n+1)$.  We obtain
$b(n+1)$ from $b(n)$ by unplaying into some bin, say $p$.  Here, $p$ is the
leftmost empty bin and Equation~(\ref{e:unplay}) expresses $b(n+1)$ in terms of
$b(n)$ as a piecewise function.

Unplaying adds exactly one stone to the board and fixes the bins to the right
of the $p$th bin.  Hence, for each $i \geq p$ we have
\[ \sum_{j=1}^i b_j(n+1) = \left( \sum_{j=1}^i b_j(n) \right) + 1. \]
This quantity is equivalent to $(n+1) \mod (i+1)$, by induction.

For $i < p$, we have
\[ \sum_{j=1}^i b_j(n+1) = \sum_{j=1}^i (b_j(n) - 1) = \left(\sum_{j=1}^i b_j(n)\right) - i. \]
This quantity is equivalent to $(n+1) \mod (i+1)$, by induction.
\end{proof}

This result allows us to give a simple proof that the boards are periodic in
$n$, a fact that was also observed in \cite{Betten,Broline} using different
reasoning.

\begin{corollary}\label{c:periodicity}
Fix $i > 0$.  For all $n, k \geq 0$, we have
\[ (b_1(n), b_2(n), \ldots, b_i(n)) = (b_1(n+k), b_2(n+k), \ldots, b_i(n+k)) \]
if and only if $\lcm(2, 3, \ldots, i+1)$ divides $k$.
\end{corollary}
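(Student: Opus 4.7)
The plan is to prove both directions directly from the characterization in Theorem~\ref{t:nonit}, namely that $b_j(n)$ is the unique integer in $\{0, 1, \ldots, j\}$ satisfying $\sum_{\ell=1}^{j} b_\ell(n) \equiv n \pmod{j+1}$.

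For the ``if'' direction, I would assume $M := \lcm(2, 3, \ldots, i+1)$ divides $k$, so that $n + k \equiv n \pmod{j+1}$ for every $j$ with $1 \le j \le i$. I would then induct on $j$ to show that $b_j(n+k) = b_j(n)$ for all such $j$. The inductive step is immediate: using the explicit formula
\[ b_j(n) = \Bigl(n - \sum_{\ell=1}^{j-1} b_\ell(n)\Bigr) \bmod (j+1), \]
the inductive hypothesis makes the sum appearing in the parentheses the same for $n$ and $n+k$, and $n \equiv n+k \pmod{j+1}$ makes the full quantity congruent mod $j+1$; since both $b_j(n)$ and $b_j(n+k)$ lie in $\{0, 1, \ldots, j\}$, they must be equal.

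For the ``only if'' direction, I would suppose the first $i$ coordinates of $b(n)$ and $b(n+k)$ coincide and show that $j+1$ divides $k$ for every $j = 1, 2, \ldots, i$, from which $\lcm(2, \ldots, i+1) \mid k$ is immediate. Fix such a $j$. By Equation~(\ref{e:modleft}) applied to both $b(n)$ and $b(n+k)$,
\[ \sum_{\ell=1}^{j} b_\ell(n) \equiv n \pmod{j+1} \qquad \text{and} \qquad \sum_{\ell=1}^{j} b_\ell(n+k) \equiv n+k \pmod{j+1}. \]
Since the hypothesis makes the two left-hand sides identical, subtracting gives $k \equiv 0 \pmod{j+1}$, as desired.

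I do not anticipate any serious obstacle here: the heavy lifting has already been done in Theorem~\ref{t:nonit}, and the corollary just reads off the period of each coordinate from the moduli $2, 3, \ldots, i+1$ governing the partial sums. The only subtlety is making sure the inductive step in the ``if'' direction is carried out one bin at a time, so that the normalization $0 \le b_j \le j$ can be invoked to conclude equality (rather than mere congruence) at each stage.
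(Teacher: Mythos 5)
Your proposal is correct and follows essentially the same route as the paper: the ``if'' direction proceeds bin by bin using the explicit formula of Theorem~\ref{t:nonit} together with the normalization $0 \le b_j \le j$, and the ``only if'' direction equates the partial sums and reads off $(j+1) \mid k$ for each $j \le i$ from Equation~(\ref{e:modleft}). If anything, your version is slightly more careful, since you phrase the forward direction for an arbitrary multiple $k$ of $\lcm(2,\ldots,i+1)$ rather than just $k = \lcm(2,\ldots,i+1)$ itself.
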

\begin{proof}
Fix $i$ and let $m = \lcm(2, 3, \ldots, i+1)$.
Since $m$ is a multiple of $2$, we have that $b_1(n) = b_1(n+m) \mod 2$.
Then since $m$ is a multiple of $3$, we also have that
\[ b_2(n+m) = ( (n+m) - b_1(n+m) ) \mod 3 = ( n - b_1(n) ) \mod 3 = b_2(n) \]
by Theorem~\ref{t:nonit}.  Continuing in this fashion, we have that $b_j(n) =
b_j(n+m)$ for all $1 \leq j \leq i$.

Conversely, if $b_j(n) = b_j(n+k)$ for all $1 \leq j \leq i$, then their partial
sums are also equal, so we must have that $n \equiv n+k \mod (j+1)$ for each $1
\leq j \leq i$ by Theorem~\ref{t:nonit}.  Hence, $k$ is a multiple of $2, 3,
\ldots, i+1$.  Therefore, $\lcm(2, 3, \ldots, i+1)$ is the minimal period of the
sequences $(b_1(n), b_2(n), \ldots, b_i(n))$.
\end{proof}

\bigskip
\section{Boards with prescribed length}\label{s:length_boards}

In this section, we explore a dual setting to that of Section~\ref{s:nonit}.
Rather than developing the board from the bins closest to the Ruma and working
to the right, we instead fix the length of the board and work left, from the
furthest bin towards the Ruma.  In contrast to the results of the previous
section where we found an expression for the unique winning board with a given
total number of stones, there are many winning Tchoukaillon boards of a given
length.

We begin with a dual form of Theorem~\ref{t:nonit}.

\begin{corollary}\label{c:niright}
Fix $n \geq 0$.  The $b_i(n)$ satisfy
\begin{equation}\label{e:modright}
\sum_{j=i}^{\infty} b_j(n) \equiv 0 \mod i, \text{ for each $i \geq 1$. }
\end{equation}
\end{corollary}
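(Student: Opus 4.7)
The plan is to observe that Corollary~\ref{c:niright} is essentially a reindexing of Theorem~\ref{t:nonit}. Since $b(n)$ is a winning board, it has only finitely many nonzero entries and the full sum satisfies $\sum_{j=1}^{\infty} b_j(n) = n$. Consequently, for $i \geq 1$ we can rewrite the tail as
\[ \sum_{j=i}^{\infty} b_j(n) \; = \; n \; - \; \sum_{j=1}^{i-1} b_j(n). \]
The main step is to apply Theorem~\ref{t:nonit} to the partial sum on the right, with the index $i-1$ playing the role of the ``$i$'' in the statement of that theorem. This yields $\sum_{j=1}^{i-1} b_j(n) \equiv n \pmod{i}$, from which the desired congruence $\sum_{j=i}^{\infty} b_j(n) \equiv 0 \pmod{i}$ follows immediately.

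The only case requiring a small comment is $i = 1$, where the partial sum $\sum_{j=1}^{0} b_j(n)$ is the empty sum. Under the standard convention this sum is $0$, which is trivially $\equiv n \pmod{1}$, and the congruence $\sum_{j=1}^{\infty} b_j(n) \equiv 0 \pmod{1}$ is vacuous. So no genuine obstacle arises; the corollary is really a restatement of Theorem~\ref{t:nonit} from the ``right-hand'' perspective, which is the point being emphasized for the dual development in Section~\ref{s:length_boards}.
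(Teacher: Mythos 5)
Your proposal is correct and is essentially the paper's own argument: the paper also subtracts the partial-sum congruence of Theorem~\ref{t:nonit} from $\sum_{j=1}^{\infty} b_j(n) = n$, just phrased with the theorem's index $i$ (giving the congruence mod $i+1$) rather than your reindexed $i-1$. Your explicit remark about the vacuous $i=1$ case is a harmless addition; nothing further is needed.
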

\begin{proof}
The sum on the left is finite since $b_j(n) = 0$ for all $j > \lf(n)$.
Moreover, $\sum_{j=1}^{\infty} b_j(n) = n$ by definition.
By Theorem~\ref{t:nonit}, we may subtract Equation~(\ref{e:modleft}) from the
equation 
\[ \sum_{j=1}^{\infty} b_j(n) \equiv n \mod (i+1) \]
obtaining $\sum_{j=1}^{\infty} b_j(n) - \sum_{j=1}^{i} b_j(n) \equiv n-n \mod
(i+1)$, for each $i \geq 1$, which yields the result.
\end{proof}

We say that a sequence $(b_1, b_2, \ldots, b_k)$ of positive integers {\em
represents a winning Tchoukaillon board} if there exists $n$ such that
\[ b_i(n) = \begin{cases} b_i & \text{ if $i \leq k$ } \\
    0 & \text{ if $i > k$. } \\
\end{cases} \]

\begin{theorem}\label{T:uppersums}
Fix $k > 0$.  A sequence of positive integers $(b_1,b_2,\ldots,b_k)$ represents
a winning Tchoukaillon board if and only if for all $1 \leq i \leq k$ we have $b_i \leq i$ and
\begin{equation}\label{e:modrightfin}
\sum_{j=i}^k b_j \equiv 0 \mod i.
\end{equation}
\vspace{-.5\baselineskip}
\end{theorem}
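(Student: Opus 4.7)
The plan is to handle the two directions of the equivalence separately, with the forward direction being essentially immediate from Corollary~\ref{c:niright} and the reverse direction being a careful application of the uniqueness part of Theorem~\ref{t:nonit}.

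For the ``only if'' direction, suppose $(b_1,\ldots,b_k)$ represents $b(n)$ for some $n$. The board-admissibility constraint $b_i \leq i$ from Section~\ref{s:init} gives the first condition. For the divisibility condition, I would apply Corollary~\ref{c:niright} directly: since $b_j(n) = 0$ for $j > k$, the infinite sum $\sum_{j=i}^\infty b_j(n)$ truncates to $\sum_{j=i}^k b_j$, and this is divisible by $i$.

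For the ``if'' direction, the strategy is to set $n = \sum_{j=1}^k b_j$ and show that the given sequence equals $b(n)$, padded with zeros. To do this I would verify the partial-sum congruence of Theorem~\ref{t:nonit} for our sequence. For $1 \leq i \leq k-1$, I would write $\sum_{j=1}^i b_j = n - \sum_{j=i+1}^k b_j$ and apply the hypothesis at index $i+1$ to conclude $\sum_{j=1}^i b_j \equiv n \pmod{i+1}$; for $i = k$, the congruence is trivial. Combined with $0 \leq b_i \leq i$, the uniqueness clause of Theorem~\ref{t:nonit} forces $b_i = b_i(n)$ for each $i \leq k$.

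It then remains to check that $b_i(n) = 0$ for $i > k$. I would do this by a quick induction: for $i = k+1$, the formula from Theorem~\ref{t:nonit} gives $b_{k+1}(n) = (n - \sum_{j=1}^{k} b_j(n)) \bmod (k+2) = 0$, and then for $i > k+1$ the subtracted partial sum equals $n$ plus a string of zeros, so the modular value stays $0$. This shows $b(n)$ agrees with our sequence (padded by zeros), so the sequence does represent a winning board. I do not expect any serious obstacle here; the main subtlety is simply to apply the hypothesis at the \emph{shifted} index $i+1$ when recovering the partial sums from the left, and to remember that the positivity of $b_k$ (embedded in ``positive integers'') is what pins down the length of the resulting board as exactly $k$.
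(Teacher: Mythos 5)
Your proposal is correct and follows essentially the same route as the paper: the ``only if'' direction is Corollary~\ref{c:niright} plus the constraint $b_i \leq i$, and the ``if'' direction sets $n = \sum_{j=1}^k b_j$, uses the hypothesis at the shifted index to recover the partial-sum congruence~(\ref{e:modleft}), and invokes the uniqueness in Theorem~\ref{t:nonit}. Your extra inductive check that $b_i(n) = 0$ for $i > k$ is a detail the paper leaves implicit (it follows from $\sum_{j=1}^{\infty} b_j(n) = n$), so it is a welcome but not essential addition.
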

\begin{proof}
Suppose we have a sequence $(b_1, b_2, \ldots, b_k)$ as in the statement and let
$n = \sum_{j=1}^k b_j$.  By subtracting Equation~(\ref{e:modrightfin}) from
\[ \sum_{j=1}^k b_j \equiv n \mod i, \]
we find that $(b_1, b_2, \ldots, b_k)$ satisfies Equation~(\ref{e:modleft}) so
agrees with $b(n)$ by Theorem~\ref{t:nonit}.

Conversely, any $b(n)$ satisfies the conditions in the statement by
Corollary~\ref{c:niright}.
\end{proof}

\begin{example}
The board $(b_1,b_2,b_3,b_4,b_5,b_6)=(1,2,0,2,4,6)$ has entries that satisfy the following congruences:
\begin{align*}
b_6 &= 6 & (6 \equiv 0 \mod 6) \\
b_5+b_6 &= 4+6 = 10 & (10 \equiv 0 \mod 5) \\
b_4+b_5+b_6 &= 2+4+6 = 12 & (12 \equiv 0 \mod 4) \\
b_3+b_4+b_5+b_6 &= 0+2+4+6 = 12 & (12 \equiv 0 \mod 3) \\
b_2+b_3+b_4+b_5+b_6 &= 2+0+2+4+6 = 14 & (14 \equiv 0 \mod 2) \\
b_1+b_2+b_3+b_4+b_5+b_6 &= 1+2+0+2+4+6 = 15 & (15 \equiv 1 \mod 1)
\end{align*}
\end{example}

Using the converse of Theorem~\ref{T:uppersums} we can construct {\em all} of the
winning Tchoukaillon boards of any given length.  For example, if
$(b_1,b_2,b_3,b_4,b_5,b_6)$ is a winning Tchoukaillon board of length $k=6$,
then we must have $b_6=6$.  By Theorem~\ref{T:uppersums} we must have $b_5 \leq
5$ and $b_5+b_6$ a multiple of 5, and thus $b_5=4$.  Similarly, we must have
$b_4 \leq 4$ and $b_4+b_5+b_6$ a multiple of $4$, which gives $b_4=2$.
Repeating this process, we have $b_3 \leq 3$ and $b_3+b_4+b_5+b_6$ a multiple of
$3$.  Since $b_3+b_4+b_5+b_6=12$ is already a multiple of 3 there are two
options; either $b_3=0$ or $b_3=3$.  We repeat this process until we find $b_1$
and have all possible sequences of length $k$ that satisfy Theorem~\ref{T:uppersums}.
The six winning Tchoukaillon boards of length six are shown in
Figure~\ref{f:boards}.

Given a positive integer $n$ there is exactly one winning Tchoukaillon board
with $n$ stones, and that board has a unique length $\l$.  Turning this around,
for a given positive integer $\l$ there can be many winning Tchoukaillon boards
with length $\l$, each with a different number of stones $n$.  However there
will always be a unique minimum number of stones $n = \nf(\l)$ that is possible in
a length $\l$ winning Tchoukaillon board.  Question (2) of Deledicq and Popova
asks for the precise relationship between $n$ and $\l$.

\begin{example}
In this example, we show how to determine the smallest number of stones
required to build a board with a given length.  The winning Tchoukaillon boards
of length six are represented in the left side of Figure~\ref{f:board_trees}.
The right side of Figure~\ref{f:board_trees} shows the corresponding upper
partial sums.

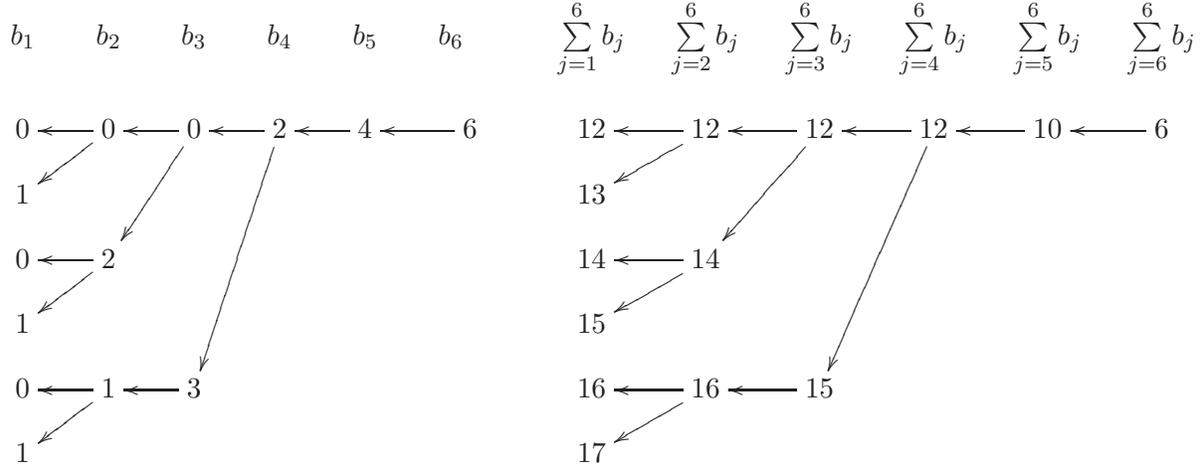
\begin{figure}[ht]
$$\xymatrix@C=1.5em@R=1em{
b_1 & b_2 & b_3 & b_4 & b_5 & b_6\phantom{\sum\limits_{j=1}^6} \\
0 \ar@{<-}[r]
   & 0 \ar@{<-}[r]
   & 0 \ar@{<-}[r]
   & 2 \ar@{<-}[r]
   & 4 \ar@{<-}[r]
   & 6 \\
1 \ar@{<-}[ur] \\
0 \ar@{<-}[r]
   & 2 \ar@{<-}[uur] \\
1 \ar@{<-}[ur] \\
0 \ar@{<-}[r]
   & 1 \ar@{<-}[r]
   & 3 \ar@{<-}[uuuur] \\
1 \ar@{<-}[ur]
}
\hspace{.2in}
\xymatrix@C=1em@R=1em{
\sum\limits_{j=1}^6 b_j
   & \sum\limits_{j=2}^6 b_j
   & \sum\limits_{j=3}^6 b_j
   & \sum\limits_{j=4}^6 b_j
   & \sum\limits_{j=5}^6 b_j
   & \sum\limits_{j=6}^6 b_j \\
12 \ar@{<-}[r]
   & 12 \ar@{<-}[r]
   & 12 \ar@{<-}[r]
   & 12 \ar@{<-}[r]
   & 10 \ar@{<-}[r]
   & 6 \\
13 \ar@{<-}[ur] \\
14 \ar@{<-}[r]
   & 14 \ar@{<-}[uur] \\
15 \ar@{<-}[ur] \\
16 \ar@{<-}[r]
   & 16 \ar@{<-}[r]
   & 15 \ar@{<-}[uuuur] \\
17 \ar@{<-}[ur]
}$$
\caption{Tchoukaillon boards of length six.} \label{f:board_trees}
\end{figure}

The process followed to obtain the top row in the right side of
Figure~\ref{f:board_trees} is equivalent to the following.  From right to left,
starting with $\l=6$, increase if necessary to obtain the next multiple of $5$,
which is $10$.  Then increase to the next multiple of $4$, which is $12$.  This
is already a multiple of $3$ so we stay at $12$, and again we stay at $12$ for
multiples of $2$ and $1$.  Notice that we have just shown that $12$ is the
smallest possible number of stones for a Tchoukaillon board of length $\l=6$.
\end{example}

In general, we have the following formula.

\begin{theorem}
The minimum number of stones $\nf(\l)$ for a winning Tchoukaillon board of length $\l$
is given by the formula
$$\nf(\l)=\tfrac{2}{1}\lceil\tfrac{3}{2}\lceil\cdots\lceil\tfrac{\l-1}{\l-2}\lceil\tfrac{\l}{\l-1}\rceil\rceil\cdots\rceil\rceil.$$
\end{theorem}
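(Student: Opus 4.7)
The plan is to reformulate the problem in terms of the partial sums $s_i := \sum_{j=i}^{\ell} b_j$ and greedily minimize from right to left. By Theorem~\ref{T:uppersums}, a length-$\ell$ sequence $(b_1,\dots,b_\ell)$ is a winning Tchoukaillon board if and only if $b_\ell \geq 1$, $0 \leq b_i \leq i$ for each $i$, and $i \mid s_i$ for every $1 \leq i \leq \ell$. Under this translation the total number of stones equals $s_1$, so minimizing $N(\ell)$ is the same as minimizing $s_1$ subject to these constraints.

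The length constraint pins down the rightmost partial sum: the bounds $1 \leq b_\ell \leq \ell$ together with $\ell \mid b_\ell$ uniquely determine $b_\ell = \ell$, i.e.\ $s_\ell = \ell$. Next, the identity $s_i = s_{i+1}+b_i$ with $0 \leq b_i \leq i$ says that $s_i$ is some multiple of $i$ in the interval $[s_{i+1}, s_{i+1}+i]$, and every such interval contains at least one multiple of $i$. Hence, given $s_{i+1}$, the minimum legal value of $s_i$ is
\[ s_i = i \Big\lceil \frac{s_{i+1}}{i}\Big\rceil, \]
and the resulting $b_i = s_i - s_{i+1} \leq i - 1$ respects the upper bound automatically.

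A short monotonicity observation confirms that this local greedy rule is globally optimal: the map $x \mapsto i\lceil x/i\rceil$ is weakly increasing, so producing the smallest feasible $s_{i+1}$ at the previous step also produces the smallest feasible $s_i$ at this step. Iterating the recursion backwards from $s_\ell = \ell$ then unfolds into
\[ s_i = i \cdot \Big\lceil \frac{i+1}{i}\Big\lceil \frac{i+2}{i+1}\Big\lceil \cdots \Big\lceil \frac{\ell}{\ell-1}\Big\rceil \cdots \Big\rceil\Big\rceil\Big\rceil, \]
which I would verify by a one-line induction (pulling $(i+1)/i$ out of the outer ceiling at each step so that $s_{i+1}/(i+1)$ matches the nested expression from the inductive hypothesis). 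Specialising to $i = 1$, and observing that the outermost ceiling is trivial because its contents are already an integer, recovers exactly the stated formula for $N(\ell) = s_1$.

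I do not expect a serious obstacle. The two checkpoints that merit a sentence are (i) the greedy choice never violates $b_i \leq i$, which is immediate from $i\lceil x/i\rceil - x < i$, and (ii) local minimization implies global minimization, which follows from the monotonicity remark above. Everything else is mechanical unfolding of the one-step recursion.
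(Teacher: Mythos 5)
Your proposal is correct and follows essentially the same route as the paper's proof: both characterize length-$\ell$ boards via Theorem~\ref{T:uppersums}, force $b_\ell=\ell$, greedily choose each upper partial sum to be the next multiple of $i$ weakly above the previous one (i.e., $i\lceil s_{i+1}/i\rceil$), and justify global minimality by the monotonicity of $x\mapsto k\lceil x/k\rceil$ in an induction from right to left. Your write-up just makes the recursion on the partial sums $s_i$ and the trivial outermost ceiling slightly more explicit than the paper does.
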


\begin{proof}
We will use the facts that given any nonnegative integers $r$, $s$ and $k$, the
next highest multiple of $k$ weakly greater than $r$ is $k \lceil \frac{r}{k}
\rceil$, and $r < s$ implies $k \lceil \tfrac{r}{k} \rceil \leq k \lceil
\tfrac{s}{k} \rceil$.

We apply Theorem~\ref{T:uppersums} to construct a board of length $\l$ having the
fewest number of stones.  By Theorem~\ref{T:uppersums}, we begin with $b_\l = \l$
and then work to the left, choosing $b_i$ so that $b_i + \sum_{j=i+1}^\l b_j$ is
the next highest multiple of $i$ that is weakly greater than $\sum_{j=i+1}^\l
b_j$.  If we assume that  
\[ \sum_{j=i+1}^\l b_j = (i+1)\lceil\tfrac{i+2}{i+1}\lceil\cdots\lceil\tfrac{\l-1}{\l-2}\lceil\tfrac{\l}{\l-1}\rceil\rceil\cdots\rceil\rceil \] 
and that $\sum_{j=i+1}^\l b_j$ is minimal among all boards of length $\l$,
then our choice of $b_i$ ensures that
\[ \sum_{j=i}^\l b_j =
(i)\lceil\tfrac{i+1}{i}\lceil\cdots\lceil\tfrac{\l-1}{\l-2}\lceil\tfrac{\l}{\l-1}\rceil\rceil\cdots\rceil\rceil
\]
continues to be minimal, so the construction proceeds by induction.
\end{proof}

We can also prove a very rough asymptotic estimate for these numbers
combinatorially, using our knowledge of Tchoukaillon.

\begin{theorem}
The function $\nf(\l)$ is $O(\l^2)$.  In particular,
\[ \frac{\l^2}{4} + O(\l) \leq \nf(\l) \leq \frac{\l^2}{2} + O(\l). \]
\end{theorem}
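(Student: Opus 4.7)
The plan is to derive both bounds from different uses of the formula in the preceding theorem. The upper bound is essentially trivial: every valid Tchoukaillon board of length $\l$ has $b_i \leq i$, so contains at most $\sum_{i=1}^{\l} i = \l(\l+1)/2$ stones; in particular $\nf(\l) \leq \l^2/2 + O(\l)$.

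For the lower bound I will exploit the nested-ceiling formula directly. Abbreviate $M_{\l} = \l$ and $M_i = i \lceil M_{i+1}/i \rceil$ for $i < \l$, so that $\nf(\l) = M_1$. I claim the closed form
\[ M_i = (\l - i + 1)\cdot i \qquad\text{for}\qquad \lceil \l/2 \rceil \leq i \leq \l, \]
proved by downward induction on $i$. The base case $i = \l$ is immediate. For the inductive step, assuming $M_{i+1} = (\l - i)(i+1)$, we have
\[ \frac{M_{i+1}}{i} = (\l - i) + \frac{\l - i}{i}, \]
and whenever $i \geq \lceil \l/2 \rceil$ the correction term $(\l - i)/i$ lies in the interval $(0, 1]$. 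Hence $\lceil M_{i+1}/i \rceil = \l - i + 1$, giving $M_i = i(\l - i + 1)$, as required. Since $M_i \geq M_{i+1}$ (the left side is a multiple of $i$ at least $M_{i+1}$), the sequence $M_1, M_2, \ldots, M_{\l}$ is nonincreasing, so
\[ \nf(\l) = M_1 \;\geq\; M_{\lceil \l/2 \rceil} \;=\; \lceil \l/2 \rceil \cdot \bigl(\lfloor \l/2 \rfloor + 1\bigr) \;=\; \frac{\l^2}{4} + O(\l). \]

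The main delicate point is locating exactly where the closed form for $M_i$ ceases to hold: as soon as $i$ drops below $\l/2$, the quotient $M_{i+1}/i$ acquires an integer overflow beyond $\l - i$, and the recurrence loses its tidy description. Stopping the induction precisely at $i = \lceil \l/2 \rceil$ is what yields the $1/4$ constant in the lower bound, so care is needed in choosing the cutoff; the verification of the two fractional inequalities $0 < (\l-i)/i \leq 1$ at that boundary is the only nontrivial computation in the argument.
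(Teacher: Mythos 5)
Your proof is correct and takes essentially the same route as the paper: the upper bound is identical, and your closed form $M_i = i(\ell - i + 1)$ for $\lceil \ell/2 \rceil \leq i \leq \ell$ is precisely the paper's observation that the minimal board of length $\ell$ has $b_{\ell - i} = \ell - 2i$ for $0 \leq i \leq \lfloor \ell/2 \rfloor$ (whose upper partial sums are $(i+1)(\ell - i)$), merely repackaged through the nested-ceiling recurrence, with your monotonicity step $M_1 \geq M_{\lceil \ell/2 \rceil}$ playing the role of the paper's discarding of the nonnegative left-half bins. Both arguments truncate at $\ell/2$ for the same reason and yield the same $\ell^2/4 + O(\ell)$ bound.
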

\begin{proof}
Suppose $n$ is the minimum number of stones in a winning Tchoukaillon board of
length $\l$.  At each step we have $b_i(n) \leq i$, so
\[ \nf(\l) = \sum_{i=1}^{\l} b_i(n) \leq \sum_{i}^{\l} i = { {\l+1} \choose 2} = \frac{\l^2}{2} + O(\l). \]

On the other hand, Theorem~\ref{T:uppersums} shows that we obtain $\nf(\l)$ from
$\l$ by iteratively adding $b_i$ stones to $\l + \sum_{j=i+1}^{\l} b_j$ so that the
result is a multiple of $i$, for each $i$ from $\l$ down to $2$.
Since $\l + (\l - 2) = 2 (\l - 1)$, we have $b_{\l-1} = \l - 2$.  Since $\l + (\l - 2)
+ (\l - 4) = 3 (\l - 2)$, we have $b_{\l-2} = \l - 4$.  Continuing in this way, we
have $b_{\l - i} = \l - 2i$ for each $i$ from $0, \ldots, \lfloor \frac{\l}{2}
\rfloor$.  Hence,
\[ \nf(\l) = \sum_{i=1}^{\l} b_i(n) \geq 
\sum_{i = 0}^{\lfloor \l/2 \rfloor} b_{\l-i}(n)
= \sum_{i = 0}^{\lfloor \l / 2 \rfloor} (\l - 2i)
\geq \l \left(\frac{\l}{2}\right) - 2 { {\frac{\l}{2}+1} \choose 2 } \]
\[ = \frac{\l}{2} \left(\frac{\l}{2} - 1\right)
= \frac{\l^2}{4} + O(\l). \]
\end{proof}

Interestingly, the true asymptotic coefficient of $\l^2$ in $\nf(\l)$ is $1 / \pi$,
as shown in \cite{Betten,Broline}.  As we mentioned in Section~\ref{s:init}, one
proof of this result employs properties of a sequence of integers generated by
a sieving process that was studied in the late 1950's by Erd\"os, Jabotinsky
and David \cite{David,Erdos}, before Tchoukaillon was invented.

Here, we define the sieve in terms of Tchoukaillon and show that it is the same
sieve that was studied by Erd\"os et al.  Recall that $p(n-1)$ is the unique
minimal $i$ such that $b_i(n) = i$.  That is, $p(n-1)$ is the bin that is played
to win the board with $n$ stones.  We consider a sieve process in which we begin
with all of the integers
\[ S_i^{(1)} := i \]
and then let $S_i^{(k)}$ be the $i$th integer remaining after we have removed
all integers $n$ such that $p(n-1) < k$.

The first few sequences from this process are shown below.  For example, $p(n-1)
= 1$ if and only if $n$ is odd, so $\{S_i^{(2)}\}_{i \geq 1}$ consists of all
the even numbers.  Also, the sequence $\nf(\l)$ is encoded as $S_1^{(m)}$.

\[ \begin{matrix}
    S_1^{(2)} & S_2^{(2)} & S_3^{(2)} & S_4^{(2)} & S_5^{(2)} & S_6^{(2)} & S_7^{(2)} & S_8^{(2)} & S_9^{(2)} & \cdots \\
    2 & 4 & 6 & 8 & 10 & 12 & 14 & 16 & 18 & \cdots \\
\end{matrix} \]

\[ \begin{matrix}
    S_1^{(3)} & S_2^{(3)} & S_3^{(3)} & S_4^{(3)} & S_5^{(3)} & S_6^{(3)} & S_7^{(3)} & S_8^{(3)} & S_9^{(3)} & \cdots \\
    4 & 6 & 10 & 12 & 16 & 18 & 22 & 24 & 28 & \cdots \\
\end{matrix} \]

\[ \begin{matrix}
    S_1^{(4)} & S_2^{(4)} & S_3^{(4)} & S_4^{(4)} & S_5^{(4)} & S_6^{(4)} & S_7^{(4)} & S_8^{(4)} & S_9^{(4)} & \cdots \\
    6 & 10 & 12 & 18 & 22 & 24 & 30 & 34 & 36 & \cdots \\
\end{matrix} \]

\[ \begin{matrix}
    S_1^{(5)} & S_2^{(5)} & S_3^{(5)} & S_4^{(5)} & S_5^{(5)} & S_6^{(5)} & S_7^{(5)} & S_8^{(5)} & S_9^{(5)}  & \cdots \\
    10 & 12 & 18 & 22 & 30 & 34 & 36 & 42 & 48 & \cdots \\
\end{matrix} \]

We now show that this sieve which we have defined in terms of Tchoukaillon is
the same sieve studied in \cite{David,Erdos}.  Betten \cite{Betten} seems to
have been the first to observe this connection.

\begin{theorem}
We have that $\{S_i^{(k+1)}\}_{i \geq 1}$ is always obtained from
$\{S_i^{(k)}\}_{i \geq 1}$ by removing the elements $\{S_{j(k+1)+1}^{(k)}\}_{j
\geq 0}$ and re-indexing.  
\end{theorem}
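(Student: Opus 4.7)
The plan is to track the bin value $b_k$ along the stage-$k$ sieve: set $m_i := S_i^{(k)}$ and show that the sequence $b_k(m_1), b_k(m_2), b_k(m_3), \ldots$ cycles through $k, k-1, \ldots, 1, 0$ with period $k+1$. Membership $m \in \{S_j^{(k)}\}_{j \geq 1}$ is by definition equivalent to $p(m-1) \geq k$, and the extra condition $p(m-1) = k$ that triggers removal at stage $k+1$ is in turn equivalent to $b_k(m) = k$, since $p(m-1)$ is the smallest index at which $b(m)$ is harvestable. If the claimed cycle holds, the removed positions in $\{S_j^{(k)}\}$ are exactly those $i$ with $b_k(m_i) = k$, which by the cyclic pattern occur at $i = 1, k+2, 2k+3, \ldots, j(k+1)+1$, yielding the theorem.

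The crux is a transition analysis comparing $b_k(m_i)$ to $b_k(m_{i+1})$. For each $n$ with $m_i \leq n \leq m_{i+1}-2$, the integer $n+1$ lies strictly between $m_i$ and $m_{i+1}$ and so is not in $\{S_j^{(k)}\}$; hence $p(n) = p((n+1)-1) < k$, and Equation~(\ref{e:unplay}) gives $b_k(n+1) = b_k(n)$. This propagates $b_k(m_{i+1}-1) = b_k(m_i)$. At the final step $n = m_{i+1}-1$ we have $p(n) \geq k$. If $b_k(m_i) > 0$, then bin $k$ is non-empty in $b(m_{i+1}-1)$, so $k$ cannot be the leftmost empty bin; this forces $p(n) > k$ and $b_k(m_{i+1}) = b_k(m_i) - 1$ from Equation~(\ref{e:unplay}). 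If $b_k(m_i) = 0$, then bin $k$ is empty and, combined with $p(n) \geq k$, this forces $p(n) = k$ exactly, so the unplay rule gives $b_k(m_{i+1}) = k$.

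For the base case, the same ``no change when $p(n) < k$'' reasoning applied to all $n < m_1 - 1$, together with $b_k(0) = 0$, yields $b_k(m_1-1) = 0$; the transition dichotomy then forces $p(m_1-1) = k$ and $b_k(m_1) = k$, so the cycle indeed begins with the value $k$. Induction on $i$ now gives $b_k(m_i) = k - ((i-1) \mod (k+1))$, completing the proof. The main obstacle is the transition analysis at $n = m_{i+1}-1$; it hinges on the observation that every bin strictly to the left of the leftmost empty bin contains at least one stone, which is precisely what produces the clean dichotomy between ``decrement by one'' and ``jump back to $k$'' and rules out any intermediate behavior.
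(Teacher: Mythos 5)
Your proposal is correct and follows essentially the same route as the paper's proof: both track the contents of bin $k$ along the stage-$k$ sequence, showing it is set to $k$ at a removed board, decremented by one at each subsequent stage-$k$ board (unmoves depositing left of bin $k$ being irrelevant), and refilled after $k+1$ steps, so removals occur exactly at indices $j(k+1)+1$. Your version just makes the paper's bookkeeping explicit via Equation~(\ref{e:unplay}) and the membership criterion $p(n-1)\geq k$, with a cleanly stated base case and dichotomy.
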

\begin{proof}
We work by induction.  The claim is true for $S_i^{(2)}$, so suppose that we
have verified the claim for $S_i^{(k)}$.  Recall that each board $b(n)$ can be
built from the board $b(n-1)$ by unplaying.

Every board $b(n)$ for $n \notin \{S_i^{(k)}\}_{i \geq 1}$ has its initial
play in some bin closer to the Ruma than bin $k$.  The board $b(S_1^{(k)})$ is
the minimal board with this property so bin $k$ is empty until we unplay into
it, depositing $k$ stones, hence $k$ is the first bin played on $b(S_1^{(k)})$.
Therefore, $S_1^{(k)}$ will not appear in $\{S_i^{(k+1)}\}_{i \geq 1}$.

We follow the unplay algorithm to go from board $b(S_1^{(k)})$ to subsequent
boards $b(S_{i}^{(k)})$ in the sequence $\{S_{i}^{(k)}\}_{i \geq 1}$.  When the
unmove deposits stones in a bin that is closer to the Ruma than bin $k$, there
is no effect on the number of stones in bin $k$, and this corresponds to one of
the boards that we already removed in the sieve process.  Hence, we can ignore
these moves.

Otherwise, our unmove removes a stone from bin $k$ and deposits stones in some
bin further from the Ruma than bin $k$.  Each of these boards appear in the
sequence $\{S_i^{(k)}\}_{i \geq 1}$ by induction.  This type of unmove is repeated $k$ times in
total until all of the stones have been removed from bin $k$.  The subsequent
unmove that affects bin $k$ deposits $k$ stones into bin $k$, and so
$S_{1+(k+1)}^{(k)}$ does not appear in $\{S_i^{(k+1)}\}_{i \geq 1}$.  Moreover,
the process begins again and so we have that $S_{1+j(k+1)}^{(k)}$ does not
appear in $\{S_i^{(k+1)}\}_{i \geq 1}$ for all $j \geq 0$.
\end{proof}

We remark that although $S_i^{(2)}$ is obtained by removing all the integers of
the form $2j+1$ and $S_i^{(3)}$ is obtained by removing all integers of the form
$6j+2$, the sieving process does not in general remove arithmetic sequences.

\bigskip
\section{Partial board reconstruction and the Chinese Remainder Theorem}\label{s:crt}

We now turn to the problem of reconstructing a Tchoukaillon board given only
partial information.  In order to simplify formulas involving remainders, we
will index the Tchoukaillon bins starting from $2$, rather than from $1$ in
this section.  That is, we define $\b_i(n) = b_{i-1}(n)$ and apply all of our
prior results to $\b(n)$.

Theorem~\ref{t:nonit} says that the partial sums of the bin sequence in
Tchoukaillon form valid residue classes of a single integer.  This allows us to
connect the combinatorics of the Tchoukaillon boards to sequences obtained from
the Chinese Remainder Theorem.  In order to facilitate an analogy between
these,  we construct an infinite {\em remainder board} 
\[ c(n) = (c_2(n), c_3(n), \ldots) \]
where $c_i(n) = n \mod i$ for each nonnegative integer $n$, normalized so that
$0 \leq c_i(n) < i$.  We would also like to define a sequence that agrees with
$c_i(n)$ mod $i$ but is increasing.

\begin{definition}
Let $\widetilde{c}_2(n) = c_2(n)$ and define
$\widetilde{c}_i(n)$ to be the next integer weakly greater than
$\widetilde{c}_{i-1}(n)$ that is equivalent to $c_i(n)$ mod $i$.
We call $\widetilde{c}(n) = (\widetilde{c}_2(n), \widetilde{c}_3(n), \ldots)$
an {\em increasing remainder board}.
\end{definition}

\begin{figure}[ht]
\begin{tabular}{|c|| c c c c c c || c c c c c c |}
  \hline
$n$ & $c_2$ & $c_3$ & $c_4$ & $c_5$ & $c_6$ & $c_7$ & 
$\widetilde{c}_2$ & $\widetilde{c}_3$ & $\widetilde{c}_4$ & $\widetilde{c}_5$ &
$\widetilde{c}_6$ & $\widetilde{c}_7$ \\
  \hline
0 & 0 & 0 & 0 & 0 & 0 & 0 &    0 & 0 & 0 & 0 & 0 & 0 \\
1 & 1 & 1 & 1 & 1 & 1 & 1 &    1 & 1 & 1 & 1 & 1 & 1 \\
2 & 0 & 2 & 2 & 2 & 2 & 2 &    0 & 2 & 2 & 2 & 2 & 2 \\
3 & 1 & 0 & 3 & 3 & 3 & 3 &    1 & 3 & 3 & 3 & 3 & 3 \\
4 & 0 & 1 & 0 & 4 & 4 & 4 &    0 & 1 & 4 & 4 & 4 & 4 \\
5 & 1 & 2 & 1 & 0 & 5 & 5 &    1 & 2 & 5 & 5 & 5 & 5 \\
6 & 0 & 0 & 2 & 1 & 0 & 6 &    0 & 0 & 2 & 6 & 6 & 6 \\
7 & 1 & 1 & 3 & 2 & 1 & 0 &    1 & 1 & 3 & 7 & 7 & 7 \\
8 & 0 & 2 & 0 & 3 & 2 & 1 &    0 & 2 & 4 & 8 & 8 & 8 \\
9 & 1 & 0 & 1 & 4 & 3 & 2 &    1 & 3 & 5 & 9 & 9 & 9 \\
10 & 0 & 1 & 2 & 0 & 4 & 3 &    0 & 1 & 2 & 5 & 10 & 10 \\
11 & 1 & 2 & 3 & 1 & 5 & 4 &    1 & 2 & 3 & 6 & 11 & 11 \\
12 & 0 & 0 & 0 & 2 & 0 & 5 &    0 & 0 & 0 & 2 & 6 & 12 \\
13 & 1 & 1 & 1 & 3 & 1 & 6 &    1 & 1 & 1 & 3 & 7 & 13 \\
14 & 0 & 2 & 2 & 4 & 2 & 0 &    0 & 2 & 2 & 4 & 8 & 14 \\
15 & 1 & 0 & 3 & 0 & 3 & 1 &    1 & 3 & 3 & 5 & 9 & 15 \\
16 & 0 & 1 & 0 & 1 & 4 & 2 &    0 & 1 & 4 & 6 & 10 & 16 \\
17 & 1 & 2 & 1 & 2 & 5 & 3 &    1 & 2 & 5 & 7 & 11 & 17 \\
  \hline
\end{tabular}
\caption{A list of the first remainder boards}\label{f:crboards}
\end{figure}

Initial subsequences of the first several remainder boards are shown in
Figure~\ref{f:crboards}.  We observe that the remainder boards are in natural
bijection with the Tchoukaillon boards.  In fact, the Tchoukaillon boards can be
viewed as a finite difference or ``derivative'' of the increasing remainder
boards.

\begin{theorem}\label{t:ctob}
For all $n$, we have 
\[ \widetilde{c}_i(n) = \sum_{j=2}^i \b_j(n), \hspace{0.2in} \text{ and } \hspace{0.2in} \b_i(n) = \widetilde{c}_{i}(n) - \widetilde{c}_{i-1}(n). \]
\end{theorem}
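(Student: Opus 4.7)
The plan is to prove the first identity $\widetilde{c}_i(n) = \sum_{j=2}^i \b_j(n)$ by induction on $i$; the second identity then follows immediately by telescoping (or by taking successive differences, with the convention $\widetilde{c}_1(n) = 0$ matching the empty sum).

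For the base case $i=2$, note that $\widetilde{c}_2(n) = c_2(n) = n \mod 2$ by definition, while $\b_2(n) = b_1(n) = n \mod 2$ by Theorem~\ref{t:nonit}. For the inductive step, assume $\widetilde{c}_{i-1}(n) = \sum_{j=2}^{i-1} \b_j(n)$ and consider the candidate $T := \sum_{j=2}^{i} \b_j(n) = \widetilde{c}_{i-1}(n) + \b_i(n)$. I need to verify the two defining properties of $\widetilde{c}_i(n)$: that $T \equiv c_i(n) \pmod{i}$, and that $T$ is the \emph{smallest} such integer that is weakly greater than $\widetilde{c}_{i-1}(n)$.

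The congruence follows from Theorem~\ref{t:nonit}: translating to $\b$-indexing, $\sum_{j=2}^{i} \b_j(n) = \sum_{k=1}^{i-1} b_k(n) \equiv n \equiv c_i(n) \pmod{i}$. The inequality $T \geq \widetilde{c}_{i-1}(n)$ holds because $\b_i(n) \geq 0$.

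The only real content is ruling out that $T$ skips over an earlier valid value; this is where I expect the most care. The potential pitfall is that $\widetilde{c}_i(n)$ could equal $\widetilde{c}_{i-1}(n) + r$ for some $0 \leq r < i$ with $r \equiv c_i(n) \pmod{i}$, and I must show $r = \b_i(n)$. Since any two integers congruent mod $i$ differ by at least $i$, it suffices to verify that the gap $T - \widetilde{c}_{i-1}(n) = \b_i(n)$ lies in the range $[0, i)$. But $\b_i(n) = b_{i-1}(n) \leq i-1 < i$ by the basic bound from Section~\ref{s:init}, so no smaller candidate exists. This closes the induction and yields the partial-sum identity; differencing gives $\b_i(n) = \widetilde{c}_i(n) - \widetilde{c}_{i-1}(n)$, completing the proof.
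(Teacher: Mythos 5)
Your proof is correct and rests on the same two facts as the paper's: the congruence $\sum_{j=2}^{i}\b_j(n)\equiv n\equiv c_i(n) \mod i$ from Theorem~\ref{t:nonit} and the bound $0\le \b_i(n)<i$, which together rule out any smaller admissible value. The only difference is one of direction and packaging: the paper defines $\widetilde{\b}_i:=\widetilde{c}_i(n)-\widetilde{c}_{i-1}(n)$ and invokes the uniqueness clause of Theorem~\ref{t:nonit} directly, while you induct on $i$ to check that the partial sums of $\b(n)$ satisfy the greedy defining property of $\widetilde{c}(n)$ --- essentially the same argument.
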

\begin{proof}
Fix $n$ and let $\widetilde{\b}_i$ be defined by $\widetilde{c}_{i}(n) -
\widetilde{c}_{i-1}(n)$.  Then $0 \leq \widetilde{\b}_i < i$ and
$\widetilde{c}_1(n) = 0$ so we have $\sum_{j=2}^i \widetilde{\b}_i =
\widetilde{c}_{i}(n) - \widetilde{c}_1(n) = \widetilde{c}_{i}(n)$.  This is
$\widetilde{c}_i(n) \equiv c_{i}(n) \equiv n \mod i$, so Theorem~\ref{t:nonit}
implies that $\widetilde{\b}_i(n) = \b_i(n)$.
\end{proof}

For example, if $n = 29$, then $c(29) = (1, 2, 1, 4, 5, 1, 5, 2, 9, 7, \ldots)$, so
\[ \widetilde{c}(29) = (1, 2, 5, 9, 11, 15, 21, 29, 29, 29, \ldots). \]
The corresponding Tchoukaillon board is $\b(29) = (1, 1, 3, 4, 2, 4, 6, 8, 0, 0, \ldots)$.

Many properties of Tchoukaillon boards are reflected in the remainder boards
and increasing remainder boards.  For example, the $\widetilde{c}(n)$ exhibit
the same $\lcm(2, \ldots, i)$-periodicity as the $\b(n)$, while the $c(n)$ have
a stronger form of periodicity in which the entries of each column repeat as a
block.  Also, the $\widetilde{c}(n)$ eventually stabilize at column $\lf(n)$,
so the explicit and asymptotic formulas for $\nf(\l)$ that have been developed for
Tchoukaillon apply equally well to $\widetilde{c}(n)$.

We say that a sequence $(m_{i_1}, m_{i_2}, \ldots, m_{i_k})$ of
integers {\em agrees with a winning Tchoukaillon board} if there exists
$n$ such that
\[ \b_{i_j}(n) = m_{i_j} \text{ for all $j \in \{1, \ldots, k\}$. } \]
Similarly, we say that the sequence {\em agrees with a remainder board} if there
exists $n$ such that 
\[ c_{i_j}(n) = m_{i_j} \text{ for all $j \in \{1, \ldots, k\}$. } \]

The problem of determining when a sequence agrees with a remainder board is
solved by the Chinese Remainder Theorem.

\begin{theorem}{\bf (Chinese Remainder Theorem)}\label{t:crt}
Fix a sequence $(m_{i_1}, m_{i_2}, \ldots, m_{i_k})$ of integers with $0 \leq
m_{i_j} < i_j$ for all $j$.  The sequence agrees with a remainder board if and
only if 
\[ m_{i_p} \equiv m_{i_q} \mod \gcd(i_p, i_q) \text{ for all $p \neq q$. } \]
Moreover, if the sequence agrees with the remainder board $c(n)$ then it also
agrees with the remainder boards $c(n + r (\lcm(i_1, i_2, \ldots, i_k)))$
for all $r \in \mathbb{Z}$.
\end{theorem}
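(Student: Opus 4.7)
The statement is the generalized (non-coprime) Chinese Remainder Theorem, phrased in terms of remainder boards: the sequence $(m_{i_1}, \ldots, m_{i_k})$ agrees with a remainder board if and only if the system $n \equiv m_{i_j} \mod i_j$ for $j = 1, \ldots, k$ has a common integer solution $n$. Necessity is immediate: if some $n$ satisfies $n \equiv m_{i_p} \mod i_p$ and $n \equiv m_{i_q} \mod i_q$, then reducing each congruence modulo $\gcd(i_p, i_q)$ forces $m_{i_p} \equiv n \equiv m_{i_q} \mod \gcd(i_p, i_q)$.

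For sufficiency I would induct on $k$. The base case $k = 1$ is trivial with $n = m_{i_1}$, and the case $k = 2$ is the classical B\'ezout argument: write $d = \gcd(i_1, i_2) = a i_1 + b i_2$, use the compatibility hypothesis to write $m_{i_2} - m_{i_1} = d t$ for some integer $t$, and verify that
\[ n := m_{i_1} + (a t) i_1 = m_{i_2} - (b t) i_2 \]
satisfies both congruences.  Any two solutions differ by a common multiple of $i_1$ and $i_2$, so the solution is unique modulo $\lcm(i_1, i_2)$.

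For the inductive step from $k-1$ to $k$, apply the hypothesis to the first $k-1$ indices to obtain $n'$ solving those congruences, uniquely modulo $L := \lcm(i_1, \ldots, i_{k-1})$.  I would then combine the single congruence $n \equiv n' \mod L$ with $n \equiv m_{i_k} \mod i_k$ via the $k = 2$ case, which requires the compatibility $n' \equiv m_{i_k} \mod \gcd(L, i_k)$.  The key identity is
\[ \gcd(L, i_k) = \lcm\bigl(\gcd(i_1, i_k), \, \ldots, \, \gcd(i_{k-1}, i_k)\bigr), \]
which I would verify prime-by-prime using the valuation identity $\min(\max_j a_j, b) = \max_j \min(a_j, b)$.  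Since $n' \equiv m_{i_j} \equiv m_{i_k} \mod \gcd(i_j, i_k)$ for each $j < k$ (the first because $i_j \mid L$, the second by the original hypothesis), the congruence $n' \equiv m_{i_k}$ holds modulo each $\gcd(i_j, i_k)$, hence modulo their $\lcm$, which equals $\gcd(L, i_k)$.

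The ``moreover'' clause is immediate from the definition $c_i(n) = n \mod i$: since each $i_j$ divides $M := \lcm(i_1, \ldots, i_k)$, adding any integer multiple of $M$ to $n$ leaves every residue $n \mod i_j$ unchanged, so $c(n + rM)$ agrees with the sequence as well.  The main obstacle I anticipate is the $\gcd$-$\lcm$ identity above; once that lattice-theoretic fact is in hand, everything else reduces cleanly to B\'ezout.
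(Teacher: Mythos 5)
Your proof is correct, but note that the paper does not prove this statement at all: it is stated as the classical (generalized, non-coprime) Chinese Remainder Theorem and used as a black box, with only the remark that constructive algorithms exist and that the minimal solution lies below $\lcm(i_1, \ldots, i_k)$. Your write-up therefore supplies a complete self-contained proof along the standard lines: necessity by reducing each congruence modulo $\gcd(i_p, i_q)$; the two-modulus case via B\'ezout; and the inductive step collapsing $k$ congruences to two by means of the lattice identity $\gcd(\lcm(i_1, \ldots, i_{k-1}), i_k) = \lcm(\gcd(i_1, i_k), \ldots, \gcd(i_{k-1}, i_k))$, checked prime-by-prime with the valuation identity $\min(\max_j a_j, b) = \max_j \min(a_j, b)$; the periodicity clause is immediate since each $i_j$ divides $\lcm(i_1, \ldots, i_k)$. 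Two cosmetic points: in the inductive step, the congruence $n' \equiv m_{i_j} \mod \gcd(i_j, i_k)$ follows directly from $n' \equiv m_{i_j} \mod i_j$ together with $\gcd(i_j, i_k) \mid i_j$, not from $i_j \mid L$ as your parenthetical suggests; and since the paper's remainder boards $c(n)$ are indexed by nonnegative $n$, you should add a suitable multiple of $\lcm(i_1, \ldots, i_k)$ if your constructed solution happens to be negative. Both are trivial fixes, and the argument as a whole is sound.
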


When a sequence agrees with a remainder board $c(n)$, there are constructive
algorithms to produce $n$ from the sequence.  In any case, the last part of the
theorem shows that the minimal $n$ is less than $\lcm(i_1, i_2, \ldots, i_k)$,
so can be found in finitely many steps.

We now turn to the analogous question for Tchoukaillon boards.

\begin{example}\label{e:notrelprime}
Consider whether there exists a Tchoukaillon board having $\b_5(n) = 1$ and
$\b_6(n) = 2$.  We may let $x$ denote $\b_2(n) + \b_3(n) + \b_4(n)$, and then we
have
\[ n \equiv x \mod 4 \]
\[ n \equiv x+1+2 \mod 6 \]
by Theorem~\ref{t:nonit}.  These equations force $n$ to have opposite parity,
so no such $n$ exists.

Conversely, it is straightforward to verify by computer that for any fixed $0
\leq x \leq 3$ and $0 \leq y \leq 6$, there is always a Tchoukaillon board
having $\b_4(n) = x$ and $\b_7(n) = y$.  The parameters $(x, y) = (2, 5)$ give the
smallest board with $18$ stones, while $(x,y) = (1,0)$ requires 214 stones.
\end{example}

In general, we have the following analogue of the Chinese Remainder Theorem for
Tchoukaillon boards.  

\begin{theorem}\label{t:tcrt}
Fix a sequence $(m_2, m_3, \ldots, m_k)$ of integers with $0 \leq m_i < i$ for
all $i$.  The sequence agrees with a Tchoukaillon board $\b(n)$ if and only if 
\begin{equation}\label{e:tcrt}
m_i + m_{i-1} + \cdots + m_{i-d+1} \equiv 0 \mod d
\end{equation}
for each nontrivial proper divisor $d$ of $i$ that is a power of a prime.
Moreover, if the sequence agrees with the Tchoukaillon board $b(n)$ then it also
agrees with the Tchoukaillon board $b(n + r (\lcm(2, \ldots, k)))$ for all
$r \in \mathbb{Z}$.
\end{theorem}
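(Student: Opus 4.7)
The plan is to invoke Theorem~\ref{t:nonit} in its re-indexed form (starting from bin $2$) to translate the condition that $(m_2, \ldots, m_k)$ agrees with some $\b(n)$ into the existence of an integer $n$ simultaneously satisfying $\sum_{j=2}^i m_j \equiv n \mod i$ for every $2 \leq i \leq k$.  By the generalized Chinese Remainder Theorem for arbitrary moduli, such an $n$ exists if and only if the system is pairwise compatible, i.e.,
\[
\sum_{j=i+1}^{i'} m_j \equiv 0 \mod \gcd(i,i') \qquad \text{for all } 2 \leq i < i' \leq k.
\]
The bulk of the proof consists of showing that this family of pairwise constraints is equivalent to the restricted family appearing in the theorem; the periodicity clause will then follow directly from Corollary~\ref{c:periodicity}, since $\b_2, \ldots, \b_k$ correspond under the re-indexing to $b_1, \ldots, b_{k-1}$ with period $\lcm(2, \ldots, k)$.

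The forward direction (given $\b(n)$, derive the stated congruences) is immediate from Theorem~\ref{t:nonit} by subtraction: each hypothesized constraint at index $i$ with proper prime-power divisor $d \mid i$ is exactly the pairwise compatibility of $\sum_{j=2}^{i-d} m_j \equiv n$ and $\sum_{j=2}^{i} m_j \equiv n$ modulo $d = \gcd(i-d, i)$.  For the converse, I would fix a pair $i < i'$ in $\{2, \ldots, k\}$ and reduce the pairwise constraint modulo $\gcd(i,i')$ to constraints modulo each prime power $p^a$ appearing in its factorization.  Because $p^a$ divides both $i$ and $i'$, the integer interval $\{i+1, \ldots, i'\}$ partitions into $(i'-i)/p^a$ consecutive blocks of length $p^a$, with each block ending at some multiple $M$ of $p^a$ lying in $\{i + p^a, i + 2p^a, \ldots, i'\}$.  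Each block sum is $m_{M - p^a + 1} + \cdots + m_M$, which the hypothesis forces to vanish modulo $p^a$; summing the blocks recovers the pairwise constraint modulo $p^a$, and ranging over the prime powers in $\gcd(i, i')$ recovers it in full.

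The main obstacle is the bookkeeping in this block decomposition: one must verify that each block endpoint $M$ satisfies $M \leq k$ with $p^a$ genuinely a nontrivial proper divisor of $M$, so that the hypothesis applies.  Both are easy once one notes that $i$ is itself a positive multiple of $p^a$ with $i \geq 2$, hence $i \geq p^a$; consequently $M \geq i + p^a \geq 2p^a > p^a$, so $p^a$ is a proper divisor of $M$, and $M \leq i' \leq k$.  This completes the equivalence and hence the theorem.
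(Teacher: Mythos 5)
Your proof is correct, but it takes a genuinely different route from the paper's. The paper argues by induction on $k$: it uses Theorem~\ref{t:nonit} to show every realizable value of $m_{k+1}$ satisfies the congruences (\ref{e:tcrt}), and then counts — by the periodicity of Corollary~\ref{c:periodicity} exactly $L = \lcm(2,\ldots,k+1)/\lcm(2,\ldots,k)$ values of $m_{k+1}$ are realizable over a fixed allowable $(m_2,\ldots,m_k)$, while a case analysis ($k+1$ prime, prime power, or composite) shows the number of allowable values is also $L$, so the two sets coincide. You instead translate ``agrees with some $\b(n)$'', via the uniqueness clause of Theorem~\ref{t:nonit} and the normalization $0 \leq m_i < i$, into solvability of the simultaneous congruences $n \equiv \sum_{j=2}^i m_j \mod i$ for $2 \leq i \leq k$, invoke the non-coprime form of the Chinese Remainder Theorem (Theorem~\ref{t:crt}) to reduce solvability to the pairwise conditions $\sum_{j=i+1}^{i'} m_j \equiv 0 \mod \gcd(i,i')$, and then show the prime-power conditions (\ref{e:tcrt}) generate all pairwise conditions by cutting $\{i+1,\ldots,i'\}$ into blocks of length $p^a$ ending at multiples $M$ of $p^a$; the points needing care — that $\gcd(i-d,i)=d$ when $d \mid i$ (for necessity), and that each endpoint satisfies $M \leq i' \leq k$ and $M \geq i + p^a \geq 2p^a$ so $p^a$ is a nontrivial proper divisor of $M$ (for sufficiency) — are exactly the ones you checked, and they hold. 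Your approach is more structural: it explains why prime-power divisors suffice (they generate every gcd condition by telescoping) and connects directly to the CRT machinery the paper already uses for reconstruction, at the cost of invoking the general solvability criterion for non-coprime moduli; the paper's induction avoids all gcd bookkeeping and gets existence essentially for free from periodicity, but is less transparent, since it only matches the cardinalities of the realizable and allowable sets rather than exhibiting the solution's source.
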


This result generalizes Theorem~\ref{T:uppersums} to the situation in which $k$
is not necessarily $\lf(n)$.  The complete list of conditions on the $m_i$ for
$k \leq 12$ are shown in Figure~\ref{f:crttch}.

\begin{figure}[ht]
    \small
\begin{tabular}{|l|l|l|}
    \hline
$m_4 + m_3 \equiv 0 \mod 2$ & $m_6 + m_5 \equiv 0 \mod 2$ & $m_8 + m_7 \equiv 0 \mod 2$ \\
                         & $m_6+m_5+m_4 \equiv 0 \mod 3$ & $m_8+m_7+m_6+m_5 \equiv 0 \mod 4$ \\
    \hline
$m_9+m_8+m_7 \equiv 0 \mod 3$ & $m_{10}+m_9 \equiv 0 \mod 2$ & $m_{12}+m_{11} \equiv 0 \mod 2$ \\
& $m_{10}+m_9+m_8+m_7+m_6 \equiv 0 \mod 5$ & $m_{12}+m_{11}+m_{10} \equiv 0 \mod 3$  \\
& & $m_{12}+m_{11}+m_{10}+m_9 \equiv 0 \mod 4$ \\
    \hline
\end{tabular}
\caption{Consistency conditions on Tchoukaillon boards for $k \leq 12$}\label{f:crttch}
\end{figure}

\begin{proof}
Let us say that a sequence $(m_2, \ldots, m_k)$ is {\em allowable} if it
satisfies all of the congruences given in the statement of the theorem.
We will say that it is {\em realizable} if there exists some $n$ such that
$\b_i(n) = m_i$ for all $1 \leq i \leq k$.

To prove the result, we will work by induction on $k$.  If $k \leq 3$ then there
are no conditions on the $m_i$ and so every collection $(m_2, m_3)$ is
allowable.  Also, every possible combination of values with $m_2 \in \{0, 1\}$
and $m_3 \in \{0, 1, 2\}$ is actually realized by $\b(n)$ for some $n < 6$, so
the result holds.

Now suppose that the result holds for $k$.  In order to exploit
Corollary~\ref{c:periodicity}, we partition the integer interval $\{0, \ldots,
\lcm(2, \ldots, k+1)-1\}$ into $L$ subintervals, each of length $\lcm(2,
\ldots, k)$.  Hence, $L = \frac{\lcm(2, \ldots, k+1)}{\lcm(2, \ldots, k)}$.

By the induction hypothesis, we know that every allowable sequence of values $(m_1,
\ldots, m_{k})$ is realized by some board $\b(n)$ where $0 \leq n < L$.
Moreover, if we fix $m_1, \ldots, m_{k}$, then there are $L$ distinct values
for $m_{k+1}$ that are realized among the $\b_{k+1}(n)$ for $0 \leq n < \lcm(2, \ldots,
k+1)$, by the periodicity proved in Corollary~\ref{c:periodicity}.

Moreover, we claim that any realizable value for $m_{k+1}$ is allowable.  To see
this, we begin with
\[ \sum_{2 \leq j \leq k+1} m_j(n) \equiv n \mod (k+1) \]
from Theorem~\ref{t:nonit}.
Let $d$ be a divisor of $(k+1)$, so $(k+1) = m d$ for some $m \geq 2$.  
Then we can rearrange and reduce mod $d$ obtaining
\[ \sum_{d < j \leq k+1} m_j(n) \equiv n - \sum_{2 \leq j \leq d} m_j(n) \mod d \]
and the right side is zero by Theorem~\ref{t:nonit}.  If $m = 2$ we are done,
while if $m > 2$ then $d$ divides $(m-1)d$, so the inductive hypothesis gives
\[ \sum_{d < j \leq (m-1)d} m_j(n) \equiv 0 \mod d. \]
Hence, we obtain
\[ \sum_{(k+1)-d+1 < j \leq k+1} m_j(n) \equiv n - \sum_{2 \leq j \leq d} m_j(n) - \sum_{d < j \leq (m-1)d} m_j(n) \equiv 0 \mod d. \]
These imply the allowable conditions on $m_{k+1}$ given $m_2, \ldots, m_k$.

Therefore, the set of realizable values for $m_{k+1}$ are a subset of the set of
allowable values for $m_{k+1}$.  We will show that these sets are actually equal
by proving that they have the same size.  We have already shown that the set of
realizable values for $m_{k+1}$ has size $L$.  Consider the following cases:

{\bf Case:}  $(k+1)$ is a prime.  Here, we have no new allowable conditions so
all $(k+1)$ values are allowable, and this agrees with $L = (k+1)$.

{\bf Case:}  $(k+1)$ is a prime power, say $p^r$.  Here, the allowable
conditions determine $m_{k+1}$ mod $p^{r-1}$, so there are $p$ allowable values
for $m_{k+1}$.  This agrees with $L = p$.

{\bf Case:}  $(k+1)$ is a composite number.  Here, the allowable conditions
determine $m_{k+1}$ mod each maximal prime power divisor of $(k+1)$, so
$m_{k+1}$ is completely determined by the Chinese Remainder Theorem.  This
agrees with $L = 1$.

This exhausts the cases, so we obtain the result by induction.
\end{proof}

Given a sequence $(m_{i_1}, m_{i_2}, \ldots, m_{i_k})$ of integers with
$0 \leq m_{i_j} < i_j$ for all $j$, we can use Theorem~\ref{t:tcrt} to find a winning
Tchoukaillon board that agrees with the sequence, when possible.  To do this, we
perform the following steps:
\begin{enumerate}
\item[(1)]  Fix the $m_i$ that are specified by the sequence, and then try to
    find an assignment for the remaining $m_i$, where $2 \leq i < i_k$ and $i \notin \{i_1,
    i_2, \ldots, i_k\}$, such that $(m_2, m_3, \ldots, m_{i_k})$ satisfy the
    conditions given in Theorem~\ref{t:tcrt}.
\item[(2)]  Once we have assigned $m_2, m_3, \ldots, m_{i_k}$, we can
    determine the partial sums $c_2 = m_2, c_3 = m_2+m_3, \ldots, c_{i_k} =
    \sum_{j=2}^{i_k} m_j$.
\item[(3)]  Apply the Chinese Remainder Theorem~\ref{t:crt} to solve $c(n) =
    (c_2, c_3, \ldots, c_{i_k})$ for $n$.  Theorem~\ref{t:ctob} guarantees that
    a solution exists, and this $n$ gives a winning Tchoukaillon board that
    agrees with $(m_2, m_3, \ldots, m_{i_k})$.
\end{enumerate}

\begin{remark}
It does not seem to be straightforward to determine whether Step (1) in the
above algorithm can be completed for a given sequence or not.  For example, if
$(m_6, m_7, m_8, m_{10}) = (0, 1, 1, 0)$ then $m_9$ must be equivalent to $0
\mod 2$, $1 \mod 3$ and $3 \mod 5$, which implies $m_9 = 28 + 30r$ for some $r
\in \mathbb{Z}$.  Hence, $m_9$ is not realizable as a Tchoukaillon board because
$m_9 < 9$.  On the other hand, $(m_6, m_7, m_8, m_{10}) = (0, 1, 1, 1)$ is
realizable with $m_9 = 7$.

It would also be interesting to determine how to complete Step (1) in a way
which guarantees minimality of the resulting $n$.
\end{remark}

In some cases, we can give a simpler algorithm for board reconstruction.

\begin{corollary}\label{c:primerec}
Fix a sequence $(m_{i_1}, m_{i_2}, \ldots, m_{i_k})$ of integers where each $0
\leq m_{i_j} < i_j$ and each $i_j$ is a prime number.  Then the sequence agrees
with a winning Tchoukaillon board.
\end{corollary}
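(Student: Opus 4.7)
The plan is to apply the three-step algorithm that follows Theorem~\ref{t:tcrt}, where Step (1) is the potentially troublesome one. What makes the prime case tractable is a trivial observation: if $i$ is prime, then $i$ has no nontrivial proper divisors at all, let alone prime-power ones, so the conditions of Theorem~\ref{t:tcrt} impose nothing on $m_i$. Hence the specified values $m_{i_1}, \ldots, m_{i_k}$ impose no constraints on each other under Theorem~\ref{t:tcrt}, and the entire task reduces to extending the partial sequence to a full allowable sequence $(m_2, m_3, \ldots, m_{i_k})$.

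I would construct this extension by induction on $i$ from $2$ up to $i_k$, assigning $m_i$ according to the following rule. If $i \in \{i_1, \ldots, i_k\}$, take $m_i$ to be the specified value. If $i$ is prime but not among the $i_j$, set $m_i = 0$ (or any value in $\{0, \ldots, i-1\}$, since no condition of Theorem~\ref{t:tcrt} references a prime index). If $i$ is composite, choose any $m_i \in \{0, 1, \ldots, i-1\}$ satisfying every congruence $m_i + m_{i-1} + \cdots + m_{i-d+1} \equiv 0 \pmod d$ as $d$ ranges over the nontrivial proper prime-power divisors of $i$. In each subcase the updated partial sequence is allowable by construction.

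The one step that requires justification, and indeed the only real obstacle, is that in the composite case a valid $m_i$ always exists. This is precisely the content of the counting argument in the proof of Theorem~\ref{t:tcrt}: given an allowable sequence $(m_2, \ldots, m_{i-1})$, the set of allowable values for $m_i$ coincides with the set of realizable values, and this set has size $L = \lcm(2, \ldots, i)/\lcm(2, \ldots, i-1) \geq 1$. Hence a valid $m_i$ exists and the induction carries through. Once the full allowable sequence $(m_2, \ldots, m_{i_k})$ has been constructed, Theorem~\ref{t:tcrt} produces a winning Tchoukaillon board $\b(n)$ with $\b_i(n) = m_i$ for all $2 \leq i \leq i_k$, and in particular this board agrees with the original sequence $(m_{i_1}, \ldots, m_{i_k})$.
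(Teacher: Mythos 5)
Your proposal is correct and follows essentially the same route as the paper: fill in the unspecified bins up to $i_k$, noting that prime indices carry no congruence conditions of the form~(\ref{e:tcrt}), choose the composite-indexed entries to satisfy their conditions, and then invoke Theorem~\ref{t:tcrt} to obtain a board agreeing with the full sequence. If anything, you are more careful than the paper's one-line assignment, since you explicitly justify (via the counting step in the proof of Theorem~\ref{t:tcrt}) that a valid value always exists at each composite index.
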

\begin{proof}
Using Equation~(\ref{e:tcrt}), we set the $m_i$ where $i < i_k$ and $i$ is not
prime to be $m_i = -\sum_{i-d < j < i} m_j \mod d$.  The prime indices have no
nontrivial proper divisors, so impose no conditions of the form (\ref{e:tcrt}).
Theorem~\ref{t:tcrt} then implies that there exists a winning Tchoukaillon board
$\b(n)$.  To find $n$ explicitly, construct the partial sums $\{c_i :=
\sum_{j=2}^i m_j\}_{i=2}^{i_k}$; these determine $n$ by the Chinese Remainder
Theorem~\ref{t:crt}.
\end{proof}

\begin{example}
Suppose we would like to construct a board with $(m_3, m_7) = (1, 2)$.
Then we extend using (\ref{e:tcrt}) to
\[ (m_2, m_3, \ldots, m_7) = (0, {\bf 1}, 1, 0, 2, {\bf 2}). \]
This yields the partial sums $(c_2, c_3, \ldots, c_7) = (0, 1, 2, 2, 4, 6)$
which we view as encoding a set of simultaneous congruences that are guaranteed
to be consistent by Theorems~\ref{t:tcrt} and \ref{t:ctob}, and apply the
Chinese Remainder Theorem to find agreement with $c(202)$.  Taking successive
differences of the associated $\widetilde{c}(202)$ yields the winning
Tchoukaillon board
\[ \b(202) = (0, {\bf 1}, 1, 0, 2, {\bf 2}, 4, 3, 9, 4, 8, 12, 2, 4, 6, 8, 10, 12, 14, 16, 18, 20, 22, 24). \]
It happens that there is a smaller board satisfying the constraints, namely
\[ \b(34) = (0, {\bf 1}, 1, 2, 0, {\bf 2}, 4, 6, 8, 10). \]
\end{example}

\bigskip

\begin{example}
Suppose we would like to construct a board with $(m_3, m_7, m_9) = (1, 2, 3)$.
Then (\ref{e:tcrt}) forces $m_8$ to be even and equivalent to $1 \mod 3$, so
$m_8 = 4$.  We can choose $m_2$ arbitrarily, and assign the rest of the bins by
$m_i = - \sum_{i-d < j < i} m_j$ since this happens to satisfy $m_8+m_7+m_6+m_5=0$.
Hence, we extend to
\[ (m_2, m_3, \ldots, m_9) = (0, {\bf 1}, 1, 0, 2, {\bf 2}, 4, {\bf 3}), \]
so
\[ (c_2, c_3, \ldots, c_9) = (0, 1, 2, 2, 4, 6, 2, 4). \]
This agrees with $c(202)$, yielding the same winning Tchoukaillon board as
above.  It turns out that this board is actually the smallest one among those
agreeing with $(m_3, m_7, m_9) = (1, 2, 3)$.
\end{example}

\bigskip
\section{Sowing graphs and future directions}\label{s:conclusions}

Although Tchoukaillon is a solitaire game, it is relevant for the study of many
two-player Mancala variants.  As Donkers et al. explain:
\begin{quote}
``In any mancala game that includes the rule that a player can move again if a
sowing ends in [their] own store, these [Tchoukaillon] positions are important.
These games include Kalah, Dakon, Ruma Tchuka and many others. If a Tchoukaillon
position occurs at the player's side, the player is thus able to capture all the
counters in this position\ldots \ Also mancala games that use the 2-3 capture
rule and have no stores (like Wari and Awale) benefit from Tchoukaillon
positions.''
\end{quote}
For example, Broline and Loeb point out that certain endgame positions in Ayo
are in bijection with Tchoukaillon boards \cite{Broline}; see \cite{Donkers}
for connections to other games.

Our results on partial board reconstruction are natural from this strategic
viewpoint.  While the more involved computations may be difficult for a human to
carry out during play, we imagine that computer agents for some of the
two-player Mancala variants could use these techniques to help set up ``sweep''
moves, given that some bins must remain fixed to cover other tactical goals.
These results also give new heuristics to evaluate potential moves, for humans
and computers alike.

As we have seen, Tchoukaillon also has a rich mathematical structure.
Generalizing this solitaire game to other board shapes seems to be a natural
avenue for future research.

\begin{definition}
Let $(V, E)$ be a graph with vertices $V$ and directed edges $E$.  Suppose that
$R \subset V$ is a collection of vertices that we call {\em Ruma nodes}.  We
call $S = (V, E, R)$ a {\em sowing graph}.

A labeling of the vertices of $S$ by nonnegative integers is called a {\em
Tchoukaillon board of shape $S$}.  If $b$ denotes a board of shape $S$, then we
let $b_v$ denote the label of vertex $v$.

Given a board $b$ of shape $S$, we can peform a {\em sowing move} to obtain a
new board $b'$.  To do this, we choose a vertex $v \notin R$, a Ruma node $r \in
R$, and a path in $S$ from $v$ to $r$ of edgelength $b_v$.  We then obtain $b'$
from $b$ by adding one to each vertex label along the path and assigning the
label of $v$ to be zero.

A board is said to be {\em winning} if it is possible to achieve a labeling in
which all of the non-Ruma nodes are labeled zero, using sowing moves.

We define the {\em game graph} of $S$ to be the graph whose vertices are
Tchoukaillon boards of shape $S$, in which two boards are edge-connected by a
sowing move.  Paths in the game graph from a board $b$ to the zero board
describe how to win the particular board $b$.
\end{definition}

When the sowing graph is a directed path with a single Ruma node at the sink, we
recover the game of Tchoukaillon that we have been analyzing.  When we play the
generalized game, we must still pick up all the stones from a non-Ruma node, and
sow along a path in which the last stone ends in a Ruma.  However, there may be
multiple choices for the Ruma and path from a given starting vertex.  We have
not imposed requirements on the choice of Ruma nodes in our definition, but
notice for example that any non-Ruma sinks must necessarily be labeled zero in
every winning game.

Many of the questions that we have discussed for Tchoukaillon are relevant in
this setting.  The general problem is to characterize the winning boards of a
given sowing graph and describe algorithms to win them in a minimal number of
sowing moves.  We initiate this study by characterizing the sowing graphs that
have finite game graphs.

\begin{theorem}\label{t:ifc}
A finite sowing graph has finitely many winning boards if and only if it has
no directed cycles containing both a Ruma vertex and a non-Ruma vertex.
\end{theorem}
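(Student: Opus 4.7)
The plan is to prove the two directions separately, each by contrapositive argument.

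For the ``only if'' direction, suppose $S$ has a directed cycle $C$ containing a Ruma vertex $r$ and a non-Ruma vertex $v$; write $L$ for the length of $C$ and $L_0$ for the length of the $v$-to-$r$ arc of $C$. For each $k \ge 0$, traversing $C$ from $v$, going all the way around $C$ exactly $k$ times, and then continuing to $r$ yields a walk $W_k$ from $v$ to $r$ of length $L_0 + kL$. I propose the candidate winning board $B_k$ with $b_v = L_0 + kL$ and all other non-Ruma labels zero. The first move of the winning play sows $v$ along $W_k$, emptying $v$ and possibly depositing stones on the other non-Ruma vertices of $C$. In the degenerate situation where $C$ has no non-Ruma vertex besides $v$ (for instance, the $2$-cycle $v \to r \to v$), the board is already cleared; in general, the residual stones lie on $C$ and I would finish the play by inductively clearing them with shorter sowings along sub-walks of $C$, using that each move strictly decreases the total stone count so the procedure terminates. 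Since the boards $B_k$ have distinct values at $v$, this gives infinitely many distinct winning boards.

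For the ``if'' direction, assume that every directed cycle of $S$ lies either entirely in $R$ or entirely in $V \setminus R$. Then the condensation of $S$ into strongly connected components is a DAG, and each SCC is of a single type. The strategy is to bound the label at each non-Ruma vertex that can appear on any winning play, since this bound together with finiteness of $V$ forces the set of winning boards to be finite. I would process the non-Ruma SCCs in reverse topological order: for an SCC $K$ whose forward neighbors are all Ruma SCCs or already-handled non-Ruma SCCs, the cycle hypothesis ensures that once a walk leaves $K$ it never returns, so only finitely many ``exits'' of $K$ are available; combining this with the inductively-obtained bounds on the downstream vertex labels gives a uniform bound on the label at each vertex of $K$. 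Iterating over the topological order yields finite bounds at every non-Ruma vertex.

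The hard part will be the bound inside the ``if'' direction. Even though the condensation is acyclic, a single non-Ruma SCC may itself be highly cyclic, so walks from a vertex of the SCC to its exits can have arbitrary length, and the naive bound by the longest acyclic sub-path within the SCC is insufficient. The key input I expect to need is a graph-theoretic analogue of the partial-sum congruence of Theorem~\ref{T:uppersums}: an invariant satisfied on any winning play that, combined with the no-mixed-cycle hypothesis, forces the achievable labels at each vertex into a finite set of residue classes and thereby yields the uniform bound.
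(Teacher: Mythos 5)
Your ``only if'' direction contains a genuine gap: the boards $B_k$ you write down need not be winning, and the closing appeal to ``each move strictly decreases the total stone count'' only shows that play terminates, not that it terminates at the empty board; a play can get stuck at a nonzero position in which no label matches the length of any walk to a Ruma. Your own ``degenerate'' case already fails: on the $2$-cycle $v \to r \to v$ the sowing walk of length $1+2k$ wraps and redeposits $k$ stones on $v$, so the board is not cleared, and for $k=2$ the forced move takes $b_v=5$ to $b_v=2$, after which no move exists (every walk from $v$ to $r$ has odd length), so $B_2$ is not winning. The same happens on the paper's $4$-cycle of Figure~\ref{f:cb}: taking $v=b_1$, the board $(5,0,0)$ is forced to $(1,1,1)$ and then freezes at $(0,1,1)$, and indeed $(5,0,0)$ never appears in the unplay chain listed there. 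The paper avoids having to certify clearability of any guessed board by arguing in the unplay direction of Section~\ref{s:init}: starting from the empty board, an unmove ending at a Ruma $r$ on a mixed cycle is always available (walk backwards around the cycle, decrementing labels, and deposit into the nearest vertex whose label is minimal on the cycle), every board so produced is winning by construction, and each unmove strictly increases the number of stones, so infinitely many winning boards arise. That inversion is the idea your construction is missing.

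Your ``if'' direction is a plan rather than a proof: the entire content is the uniform bound on labels inside a cyclic all-non-Ruma strongly connected component, and you explicitly defer it to an unstated ``graph-theoretic analogue of Theorem~\ref{T:uppersums}'' that you never formulate, so nothing is established. For comparison, the paper disposes of this direction in one sentence: a vertex $v$ lying on no cycle with a Ruma node can, in a winning board, hold at most as many stones as the longest sowing path from $v$ to a reachable Ruma, and boundedness of all labels gives finiteness. Your unease about that bound is in fact well founded: if sowing paths may wrap around an all-non-Ruma cycle with the same liberality used in Figure~\ref{f:cb}, then walks from such a vertex to a Ruma have unbounded length and no bound of this kind is available --- for example, with non-Ruma vertices $u,v$, edges $u \to v$, $v \to u$, $v \to r$, and $r$ a Ruma, one checks that an unmove is available at every board (from $(b_u,b_v)$ with $b_u \ge b_v$ unplay to $(b_u-b_v,\,2b_v+1)$, otherwise to $(2b_u+2,\,b_v-b_u-1)$), so unplaying runs forever even though no cycle meets $R$. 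So the congruence-type invariant you hope for cannot exist at that level of generality; closing the gap requires either restricting sowing paths so they cannot traverse a cycle avoiding the Rumas (the reading under which the paper's bound works) or strengthening the hypothesis, and your proposal does neither. As it stands, you have correctly located the difficulty but not resolved it, so this direction remains unproved.
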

\begin{proof}
Observe that the unplay algorithm introduced in Section~\ref{s:init} still
generates the game graph in our generalized setup.  Namely, we choose any path
from a vertex $v$ labeled zero to a Ruma node $r$, and unplay by decreasing all
of the labels along the path and setting $b_v$ to be the edgelength between $v$
and $r$.  Doing this in all possible ways from the empty board yields all
possible winning boards.

If there exists $v \notin R$ and $r \in R$ both contained in a directed cycle
then an unplay move from $r$ will always succeed, as follows.  It may be the
case that all of the non-Ruma vertices have positive labels at the start of the
unmove, but we will reduce all of the vertex labels by one each time we wrap
around the cycle.  Therefore, we can always unplay from $r$ into the closest
vertex that is labeled minimally among the labels of vertices on the cycle.
Hence, the game graph will be infinite in this case.

On the other hand, if $v$ is a vertex that is not contained in a directed cycle
with a Ruma node, then $b_v$ must be weakly less than the edgelength between $v$
and the furthest Ruma node reachable from $v$; otherwise, we could never sow
from $v$.  If this is the case for all vertices of $S \setminus R$, then we
obtain an upper bound on the number of winning boards, so the game graph will be
finite.
\end{proof}

\begin{example}
Suppose $S$ is a star graph with $k$ spokes, each of length $\l$, having a
single Ruma node at the center of the star.  Then, each of the $k$ spokes is an
independent Tchoukaillon game, so moves on different spokes commute.

Therefore, the game graph is the $k$-fold Cartesian product of the path on
$\nf(\l+1)$ vertices.  If we let $\l \rightarrow \infty$ so that each spoke is an
infinite Tchoukaillon board, as we have considered in this paper, then the game
graph becomes ``square-grid'' lattice graph $\mathbb{Z}^k$.
\end{example}

\begin{example}
Suppose $S$ is a finite directed cycle with one of the vertices chosen to be a
Ruma node.  This game is a mix of two existing solitaire Mancala games:  It can
be viewed as a variant of Tchoukaillon that allows sowing to wrap around the
board; alternatively, it can be viewed as a variant of Tchuka Ruma in which
``chaining'' moves are not allowed (see \cite{campbell-chavey}, for example).

Applying the unplay algorithm, we look for reverse paths from the Ruma.  As in
the proof of Theorem~\ref{t:ifc}, we can always unplay from the Ruma into the
closest vertex having the minimum label.  In particular, the game tree is still
a path.  Unlike Tchoukaillon, however, not every integer is attained as the
total number of stones on some board.  The first several boards are illustrated
in Figure~\ref{f:cb}.  It is an open question to characterize the integers that
do arise in this game.  
\end{example}

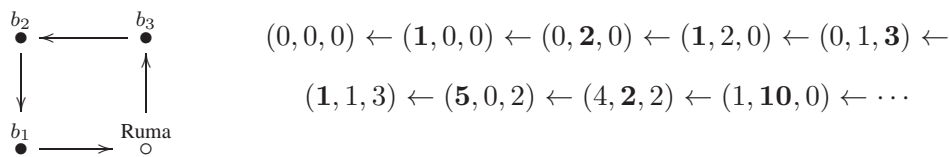
\begin{figure}[t]
\[
\xymatrix@C=2.5em@R=2em{
\stackrel{b_2}{\bullet} \ar@{->}[d] & \stackrel{b_3}{\bullet} \ar@{->}[l] \\
\stackrel{b_1}{\bullet} \ar@{->}[r] & \stackrel{\text{Ruma}}{\circ} \ar@{->}[u] \\
} 
\hspace{0.4in} 
\xymatrix@C=2.5em@R=0.5em{
(0, 0, 0) \leftarrow
({\bf 1}, 0, 0) \leftarrow
(0, {\bf 2}, 0) \leftarrow
({\bf 1}, 2, 0) \leftarrow
(0, 1, {\bf 3}) \leftarrow \\
({\bf 1}, 1, 3) \leftarrow
({\bf 5}, 0, 2) \leftarrow
(4, {\bf 2}, 2) \leftarrow
(1, {\bf 10}, 0) \leftarrow
\cdots \\
}
\]
\caption{An initial segment of the game graph for Tchoukaillon on the $4$-cycle.
Boards are labeled as $(b_1, b_2, b_3)$.}\label{f:cb}
\end{figure}

Some other questions that can be asked for any sowing graph include:
\begin{enumerate}
\item[(1)] Can we describe a general ``play'' algorithm that always solves a board in
    a minimal number of steps?  This algorithm will need to handle forks and
    cycles in the sowing graph deterministically.
\item[(2)] A finite sowing graph without cycles will have a finite number of
    winning boards.  Can we obtain a formula to count these in terms of
    properties of the graph?  What sequences are obtained for various families
    of graphs?
\item[(3)] How do the partial board reconstruction results generalize for
    sowing graphs?
\end{enumerate}

\bigskip
\section*{Acknowledgements}

Sowing games have been studied in a series of undergraduate research projects
for the last several years at James Madison University.  These projects were
partially supported by MAA-NREUP, NSF-DMS-0552763, NSF-DMS-0845277,
NSF-DMS-1004516, NSA-H98230-06-1-0156, and NSA-H98230-11-1-0215.   The authors
would like to thank the following undergraduates, whose work on sowing games
motivated the subject of this paper: Jeff Anway, Elf Bauserman, David Creech,
Brittany Dyson, Amanda Fernandez, Reginald Ford, Tyesha Hall, Mikias Kidane,
Durrell Lewis, Fierra Mason, David Melendez, Juan Carlos Ortega, Zurisadai
Pena, Spencer Sims, Melinda Vergara, Benjamin Warren, and Fanya Wyrick-Flax.

\bigskip

\end{document}